\newcounter{continue-counter}
\newcommand{\N}{\mathbb{N}}
\newcommand{\Z}{\mathbb{Z}}
\newcommand{\Q}{\mathbb{Q}}
\newcommand{\SymGroup}[1][n]{\mathfrak{S}_{#1}}
\newcommand{\Specht}[1]{S_{#1}}
\newcommand{\TransposePartition}[1]{{#1}^{\ast}}
\newcommand{\into}{\hookrightarrow}
\newcommand{\onto}{\twoheadrightarrow}
\newcommand{\Catgr}{\mathbf{gr}}
\newcommand{\TensorPower}[2]{#1^{\otimes {#2}}}
\newcommand{\CatFuncGrinit}[4][]{%
    \ifstrempty{#1}{%
        \mathcal{F}^{#2}_{#3}(\Catgr^{#4})
    }{%
        \mathcal{F}^{#2}_{#3}(\Catgr^{#4}, {#1})
    }
}
\newcommand{\CatFuncGraux}[3][]{\CatFuncGrinit[{#1}]{#2}{#3}{}}
\newcommand{\CatFuncGr}[1][]{\CatFuncGraux[{#1}]{}{}}
\newcommand{\CatOutFuncGr}[1][]{\CatFuncGraux[{#1}]{\mathrm{out}}{}}
\newcommand{\CatPolyFunc}[2][]{\CatFuncGraux[{#1}]{}{#2}}
\newcommand{\CatPolyOutFunc}[2][]{\CatFuncGraux[{#1}]{\mathrm{out}}{#2}}
\newcommand{\Foutpol}{\CatPolyOutFunc{\mathrm{pol}}}
\newcommand{\LeftAdjq}[1]{\mathbf{q}_{#1}}
\newcommand{\Vect}[1]{\textrm{Vect}_{#1}}
\newcommand{\ab}{\mathfrak{a}}
\newcommand{\FreeGroup}[1][n]{\Z^{\star{#1}}}
\DeclareMathOperator{\Aut}{Aut}
\DeclareMathOperator{\Ext}{Ext}
\DeclareMathOperator{\Hom}{Hom}
\DeclareMathOperator{\Id}{Id}
\DeclareMathOperator{\Lie}{Lie}
\DeclareMathOperator{\Out}{Out}
\theoremstyle{plain}
\newtheorem{thm}{Theorem}[section]
\newtheorem{lem}[thm]{Lemma}
\newtheorem{prop}[thm]{Proposition}
\newtheorem{conj}[thm]{Conjecture}
\newtheorem{cor}[thm]{Corollary}
\theoremstyle{definition}
\newtheorem{defi}[thm]{Definition}
\newtheorem{expl}[thm]{Example}
\theoremstyle{remark}
\newtheorem{rmk}[thm]{Remark}
\newtheorem{terminology}[thm]{Terminology}
\title{(Non-)vanishing results for extensions between simple outer functors on free groups}
\author{Louis Hainaut}
\begin{document}

\maketitle

\begin{abstract}
    In this article we study cohomological properties of the category of \emph{polynomial outer functors} on free groups, which are the functors from the category of finitely generated free groups to the category of rational vector spaces which send all inner automorphisms to the identity morphism, and which satisfy a certain polynomiality property. More precisely, we prove vanishing and non-vanishing results for the $\Ext$ groups between simple polynomial outer functors. This work is inspired by an earlier result of Vespa for the category of all polynomial functors from finitely generated free groups to rational vector spaces; it follows in particular from her results that, in this larger category, the $\Ext$ groups between simple functors are always concentrated in a specific single degree. Our main results show that, when we pass to the full subcategory of polynomial outer functors, $\Ext$ groups between simple functors are sometimes non-trivial outside of this specific degree.
\end{abstract}

\section{Introduction}
Let $\Catgr$ be the category of finitely-generated free groups and group morphisms, and let $\Vect{\Q}$ be the category of finite-dimensional rational vector spaces. Furthermore let $\CatFuncGr$ be the category of functors $F\colon \Catgr\to \Vect{\Q}$. As we will discuss below, the objects of this functor category provide natural families of representations of $\Aut(\FreeGroup)$, the group of all automorphisms of the free group $\FreeGroup\in\Catgr$.

An important family of functors in $\CatFuncGr$ is given by the functors
\begin{equation*}
    \TensorPower{\ab}{d}\colon G \mapsto (G_{ab}\otimes \Q)^{\otimes d} = H_1(G, \Q)^{\otimes d}.
\end{equation*}
More explicitly, these functors are defined on objects by $\TensorPower{\ab}{d}(\FreeGroup) = (\Q^n)^{\otimes d}$. The functors $\TensorPower{\ab}{d}$ are equipped with an action of the symmetric group $\SymGroup[d]$ on the right, induced by the usual \emph{place permutation} action: $(v_1\otimes\ldots\otimes v_d)\bullet\sigma = v_{\sigma(1)}\otimes\ldots\otimes v_{\sigma(d)}$.
Vespa computed in \cite{Ves18} the groups $\Ext^*_{\CatFuncGr}(\TensorPower{\ab}{n}, \TensorPower{\ab}{m})$, obtaining in particular that these vanish for $*\neq m-n$ and describing the structure of $\Ext^{m-n}_{\CatFuncGr}(\TensorPower{\ab}{n}, \TensorPower{\ab}{m})$ as a $(\Q\SymGroup[n],\Q\SymGroup[m])$-bimodule.

A functor $F\in \CatFuncGr$ is called \emph{polynomial} if certain \emph{cross-effect functors} associated to $F$ vanish (Definition \ref{def:polynomial-functors}). The \emph{degree} of $F$ is then defined to be the maximal index of a non-vanishing cross-effect functor. By functoriality, for any functor $F\in\CatFuncGr$ evaluation at $\FreeGroup$ gives a $\Aut(\FreeGroup)$-representation $F(\FreeGroup)$. Powell and Vespa introduced in \cite{PV18} the full subcategory $\CatOutFuncGr$ of \emph{outer functors}, whose objects are those functors $F$ for which the inner automorphisms $\operatorname{Inn}(\FreeGroup)$ act trivially on $F(\FreeGroup)$, or in other words $F(\FreeGroup)$ is a representation of $\Out(\FreeGroup)$. 
Since its introduction, this category has been observed to play an important role in multiple parts of mathematics, including Hochschild--Pirashvili (co)homology \cite{TW19, PV18}, string topology as well as embedding and mapping spaces \cite{TW19}, configuration spaces on a wedge sum of circles \cite{GH22}, cohomology of (tropical) moduli spaces of curves \cite{BCGY21, BCGY23} and Jacobi diagrams in handlebodies \cite{Katada23, Katada21, Ves22}. It has been observed that some of the outer functors studied there provide examples of highly non-trivial representations of $\Out(\FreeGroup)$.

We will denote by $\CatPolyOutFunc{\mathrm{pol}}$ the subcategory of \emph{polynomial outer functors}. The functors $\TensorPower{\ab}{d}$ are examples of polynomial outer functors, of degree exactly $d$. Our goal in this paper is to study the groups $\Ext^*_{\CatPolyOutFunc{\mathrm{pol}}}(\TensorPower{\ab}{n}, \TensorPower{\ab}{m})$. The main result we obtain is the following:

\begin{thm}\label{thm:intro-vanishing-and-non-koszul}
    Let $n,m$ be two non-negative integers.
    \begin{enumerate}
        \item The groups $\Ext^k_{\CatPolyOutFunc{\mathrm{pol}}}(\TensorPower{\ab}{n}, \TensorPower{\ab}{m})$ vanish for all $k>m-n$.
        \item The group $\Ext^k_{\CatPolyOutFunc{\mathrm{pol}}}(\TensorPower{\ab}{n}, \TensorPower{\ab}{m})$ does not always vanish when $k<m-n$. For example $\Ext^4_{\CatPolyOutFunc{\mathrm{pol}}}(\TensorPower{\ab}{4}, \TensorPower{\ab}{9}) \neq 0$.
    \end{enumerate}
\end{thm}
The same arguments also show that among the groups $\Ext^k_{\CatPolyOutFunc{\mathrm{pol}}}(\TensorPower{\ab}{3}, \TensorPower{\ab}{10})$ with $k < 7$ at least one of them is non-trivial.

\begin{rmk}
    Vespa's theorem \cite{Ves18} that $\Ext^*_{\CatFuncGr}(\TensorPower{\ab}{n},\TensorPower{\ab}{m})$ is concentrated in degree $*=m-n$ was interpreted by Powell in \cite{Pow21} as a certain Koszul-type property for the category $\CatFuncGr$. The second item in the theorem above shows that $\Foutpol$ does not have this Koszul-type property.

    It happens however that for certain values of $n$ and $m$ the groups $\Ext^*_{\Foutpol}(\TensorPower{\ab}{n},\TensorPower{\ab}{m})$ are concentrated in degree $* = m-n$. When this is the case, we will by analogy say that, for that specific pair $(n, m)$, $\Ext^*_{\Foutpol}(\TensorPower{\ab}{n},\TensorPower{\ab}{m})$ has the \emph{Koszul property}.
\end{rmk}

We also prove (some of the notation used here is explained later):

\begin{thm}
    For any $n\geq 4$, we have $\Ext^*_{\CatPolyOutFunc{\mathrm{pol}}}(\TensorPower{\ab}{2}, \TensorPower{\ab}{n}) = 0$. The only non-zero values are the $(\Q\SymGroup[n],\Q\SymGroup[m])$-bimodules $\Ext^0_{\CatPolyOutFunc{\mathrm{pol}}}(\TensorPower{\ab}{2}, \TensorPower{\ab}{2}) \cong \Q\SymGroup[2]$ and $\Ext^1_{\CatPolyOutFunc{\mathrm{pol}}}(\TensorPower{\ab}{2}, \TensorPower{\ab}{3}) \cong \Specht{(2)}\boxtimes\Specht{(1^3)}$.
\end{thm}

Note that $\Ext^*_{\CatPolyOutFunc{\mathrm{pol}}}(\Q, \TensorPower{\ab}{n}) = 0$ for all $n\neq 0$ and $\Ext^*_{\CatPolyOutFunc{\mathrm{pol}}}(\ab, \TensorPower{\ab}{n}) = 0$ for all $n\neq 1$ since the constant functor $\Q$ and the functor $\ab$ are projective in $\CatPolyOutFunc{\mathrm{pol}}$. The striking feature of the above theorem is that we have a vanishing result even though $\TensorPower{\ab}{2}$ is not projective (and indeed, this non-projectivity is witnessed by $\Ext^1_{\CatPolyOutFunc{\mathrm{pol}}}(\TensorPower{\ab}{2}, \TensorPower{\ab}{3})\neq 0$).

An important family of polynomial outer functors are the functors denoted $\omega\beta\Q\SymGroup[n]$ for $n\geq 0$, with $\Q\SymGroup[n]$ denoting the regular representation of $\SymGroup[n]$. It was proved in \cite{PV18} that the functor $\omega\beta\Q\SymGroup[n]$ is the injective envelope in the category $\CatPolyOutFunc{\mathrm{pol}}$ of the functor $\TensorPower{\ab}{n}$ (cf Corollary \ref{cor:simple-and-injective-outer}), and this family of functors appears naturally in some of the contexts mentioned above. In particular understanding these functors better, starting with their composition factors, is equivalent to an open problem posed in \cite{TW19}. Computing $\Ext^*_{\CatPolyOutFunc{\mathrm{pol}}}(\TensorPower{\ab}{n}, \TensorPower{\ab}{m})$ as a $(\Q\SymGroup[n], \Q\SymGroup[m])$-bimodule for all $n, m$ would allow to compute all these composition factors, as we will now explain.

Any functor $F\in \CatPolyFunc{\textrm{pol}}$ admits a canonical \emph{polynomial filtration}, having the property that the $p$-th graded piece of this filtration, which we denote\footnote{This definition of $F^{[p]}$ causes us to commit some abuses of notation in this introduction; we will redefine them in a slightly different (but essentially equivalent) way in the next section.} $F^{[p]}$, is homogeneous of degree $p$ and semi-simple; the simple functors appearing as summands of the various $F^{[p]}$ are the \emph{composition factors} of $F$. We will use in the following that $(\omega\beta\Q\SymGroup[d])^{[t]} = 0$ if $t > d$, and that $(\omega\beta\Q\SymGroup[d])^{[d]} = \TensorPower{\ab}{d}$.

\begin{prop}\label{prop:intro-koszul-determined}
    The graded pieces $(\omega\beta\Q\SymGroup[d])^{[p]}$ for all integers $d$ and $p$ can be determined from the $(\Q\SymGroup[n], \Q\SymGroup[m])$-bimodules $\Ext^*_{\CatPolyOutFunc{\mathrm{pol}}}(\TensorPower{\ab}{n}, \TensorPower{\ab}{m})$ for all $p\leq n \leq m \leq d$.
\end{prop}

We will prove (a stronger version of) this result in Theorem \ref{thm:determine-composition-factors}. However we must note that, at the time of writing, the result as stated here is of limited practical value, since $\Ext^k_{\CatPolyOutFunc{\mathrm{pol}}}(\TensorPower{\ab}{n}, \TensorPower{\ab}{m})$ have mostly unknown values: before this paper only results about $\Ext^0$ and $\Ext^1$ have appeared in the literature (see Proposition \ref{prop:values-Ext0-1}).

On the other hand, the graded pieces $(\omega\beta\Q\SymGroup[d])^{[p]}$ are currently better understood, and in joint work with Gadish \cite{GH22} the author completely computed them in the range $d\leq 10$.

In fact, for this paper the transferral of information goes in the other direction: indeed for Proposition \ref{prop:intro-koszul-determined} one only needs the equivariant Euler characteristics $\chi(\Ext^*_{\Foutpol}(\TensorPower{\ab}{n},\TensorPower{\ab}{m}))$, and it turns out that these are uniquely determined\footnote{The algorithms used to compute these Euler characteristics, as well as the result of some computations, can be accessed here: \url{https://github.com/louishainaut/Ext-Outer-Functors}.} by the graded pieces $(\omega\beta\Q\SymGroup[d])^{[p]}$. In some favorable cases we can even use the techniques presented here to obtain information about the individual groups $\Ext^*_{\Foutpol}(\TensorPower{\ab}{n},\TensorPower{\ab}{m})$. For example, if $\Ext^*_{\Foutpol}(\TensorPower{\ab}{n},\TensorPower{\ab}{m})$ has the Koszul property then $\chi(\Ext^*_{\Foutpol}(\TensorPower{\ab}{n},\TensorPower{\ab}{m})) = (-1)^{m-n}[\Ext^{m-n}_{\Foutpol}(\TensorPower{\ab}{n},\TensorPower{\ab}{m})]$.

Our main tool is the following spectral sequence:

\begin{prop}
    For each pair of integers $n, m$ there exists a spectral sequence of $(\Q\SymGroup[n],\Q\SymGroup[m])$-bimodules
    \begin{equation*}
        E_1^{p,q} = \Ext^{p+q}_{\Foutpol}(\TensorPower{\ab}{n},\TensorPower{\ab}{p})\otimes_{\SymGroup[p]}(\omega\beta\Q\SymGroup[m])^{[p]} \Rightarrow \Ext^{p+q}_{\Foutpol}(\TensorPower{\ab}{n},\omega\beta\Q\SymGroup[m]).
    \end{equation*}
    Moreover, if $n\neq m$ this spectral sequence converges to $0$.
\end{prop}

Combining this convergence to $0$ with the computations of $(\omega\beta\Q\SymGroup[m])^{[p]}$ performed by Gadish and the author in \cite{GH22}, we prove Theorem \ref{thm:intro-vanishing-and-non-koszul}. 

\begin{rmk}
    Combining Proposition \ref{prop:values-Ext0-1} and Theorem \ref{thm:intro-vanishing-and-non-koszul} we see that $\Ext^k_{\CatPolyOutFunc{\mathrm{pol}}}(\TensorPower{\ab}{n},\TensorPower{\ab}{n+2})$ vanishes for all $k\neq 2$, and we perform explicit computations for $k=2$ and $n\leq 8$ in Corollary \ref{cor:computing-Ext2}. More conceptually, Geoffrey Powell pointed out to the author that there is a structure of graded PROP on $\Ext^*_{\CatPolyOutFunc{\mathrm{pol}}}(\TensorPower{\ab}{n}, \TensorPower{\ab}{m})$ (similar to the structure discussed in \cite{Ves18}) and so the identity morphism $\Id\in \Hom(\ab, \ab)$ induces a stabilization map
\begin{equation*}
    \Ext^k_{\CatPolyOutFunc{\mathrm{pol}}}(\TensorPower{\ab}{n}, \TensorPower{\ab}{m}) \to \Ext^k_{\CatPolyOutFunc{\mathrm{pol}}}(\TensorPower{\ab}{n+1}, \TensorPower{\ab}{m+1}).
\end{equation*}
Our computations suggest that under this stabilization map $\Ext^2_{\CatPolyOutFunc{\mathrm{pol}}}(\TensorPower{\ab}{n}, \TensorPower{\ab}{n+2})$ could be generated by $n=3$ and $n=4$.
\end{rmk}

This introduction would not be complete without a discussion about homological stability: Vespa's Theorem \cite{Ves18} can be combined with a result of Djament \cite{Dja19} to obtain a computation of the stable cohomology of $\Aut(\FreeGroup)$ with certain twisted coefficients (specifically: for local systems obtained from the (covariant) functors $\TensorPower{\ab}{n}$). One could hope that it is possible to find a similar relation between the $\Ext$-groups computed in this paper and the stable cohomology of $\Out(\FreeGroup)$ with twisted coefficients, but no analogue of Djament's result is currently known. In any case, for these twisted coefficients, the stable cohomology of both $\Aut(\FreeGroup)$ and $\Out(\FreeGroup)$ has already been computed by Randal-Williams \cite{Randal18}, using geometric techniques inspired by \cite{Galatius11}.

\subsection{Acknowledgments}
I am indebted to Nir Gadish for suggesting the main idea of this paper and for further helpful conversations. I am also grateful to Geoffrey Powell and Christine Vespa for their interest in this project and their suggestions and comments, and to my PhD advisor Dan Petersen for his guidance and helpful conversations. Finally I also want to thank an anonymous referee for their detailed comments and helpful suggestions. The author was supported by Dan Petersen's grant ERC-2017-STG 759082, and also received financial support from the Knut and Alice Wallenberg Foundation.

\section{Categories of (outer) functors}\label{sec:recollection-outer-functors}

We start by introducing the relevant categories and some of their properties. The interested reader is encouraged to consult \cite{PV18} and the references therein for further details.

\begin{defi}
    Let $\Catgr$ denote the category of finitely-generated free groups and all group morphisms. A skeleton is given by the collection of free products $\FreeGroup[n]$ for all integers $n\geq 0$, with $\FreeGroup[0] = \{\ast\}$ by convention. We define $\CatFuncGr$ to be the category of functors $\Catgr\to \Vect{\Q}$, with $\Vect{\Q}$ denoting the category of rational vector spaces.
\end{defi}

\begin{expl}
    A fundamental example of a functor in $\CatFuncGr$ is the linearization functor $\ab(G) = G_{ab}\otimes\Q = H_1(G,\Q)$, acting on objects as $\FreeGroup[n]\mapsto\Q^n$. We obtain many more functors in $\CatFuncGr$ by post-composition with any functor $F\colon \Vect{\Q}\to\Vect{\Q}$. In particular, post-composition with $T^d(V) = V^{\otimes d}$ produces the functors $\TensorPower{\ab}{d}$ discussed in the introduction.
\end{expl}

The free product $\FreeGroup[m]\star\FreeGroup[n] = \FreeGroup[(m+n)]$ and the trivial group $\FreeGroup[0]$ provide a structure of symmetric monoidal category $(\Catgr, \star, \FreeGroup[0])$ generated by $\Z$, such that $\FreeGroup[0]$ is both a zero object and the unit of $\star$. We can therefore, as in \cite{HPV15}, define for each $F\in \CatFuncGr$ and each $n\in\N$ a \emph{cross-effect} $\operatorname{cr}_n F\colon\Catgr^{\times n}\to \Vect{\Q}$.
Once we have a notion of cross-effects, we can as in the original work of Eilenberg--MacLane \cite{EM54} define \emph{polynomial functors}.

\begin{defi}[Polynomial functors]\label{def:polynomial-functors}
    A functor $F\in\CatFuncGr$ is called a \emph{polynomial functor} of degree (at most) $n$ if its $(n+1)$-st cross-effect is trivial, ie $\operatorname{cr}_{n+1}F\equiv 0$.

    Let $\CatPolyFunc{n}$ be the full subcategory of $\CatFuncGr$ whose objects are the polynomial functors of degree at most $n$. The properties of cross-effect functors imply that we have a sequence of inclusions
    \begin{equation*}
        \CatPolyFunc{0} \subset \ldots \subset \CatPolyFunc{d}\subset \CatPolyFunc{d+1}\subset \ldots\subset\CatFuncGr.
    \end{equation*}

    The union $\bigcup_{d\in\N}{\CatPolyFunc{d}}$ is the category of polynomial functors, denoted $\CatPolyFunc{\mathrm{pol}}$.
\end{defi}

In the following, we will denote by $\operatorname{cr}_n^{\Z}$ the functor
\begin{equation}
\begin{aligned}
    \operatorname{cr}_n^{\Z}\colon \CatPolyFunc{n}&\to \Q[\SymGroup]\textrm{-mod}\\
    F &\mapsto \operatorname{cr}_n F(\Z,\ldots,\Z),
\end{aligned}
\end{equation}
with the action of $\SymGroup$ induced by the permutation action on $\Catgr^{\times n}$. In particular, this means that $\operatorname{cr}_n^{\Z}F$ is a \emph{left} $\Q[\SymGroup[n]]$-module. Note that our choice of notation deviates slightly from the literature \cite{DV15, PV18}.

The categories of polynomial functors $\CatPolyFunc{d}$ enjoy the following properties, which can be summarized by the \emph{recollement diagram} of \cite[Théorème 3.2]{DV15}:

\begin{prop}\label{prop:recollement-diagram-poly}\leavevmode
    \begin{enumerate}
        \item The inclusion $\operatorname{incl}_d \colon\CatPolyFunc{d}\into\CatFuncGr$ admits a left and a right adjoint $\LeftAdjq{d}, \mathbf{p}_d\colon \CatFuncGr\to\CatPolyFunc{d}$, with the additional property that both the counit $\LeftAdjq{d}\circ\operatorname{incl}_d\to \Id$ and the unit $\Id\to\mathbf{p}_d\circ\operatorname{incl}_d$ are isomorphisms.
        \item The functor $\operatorname{cr}_d^{\Z}\colon \CatPolyFunc{d}\to\Q[\SymGroup[d]]\mathrm{-mod}$ is naturally equivalent to $\Hom_{\CatPolyFunc{d}}(\TensorPower{\ab}{d}, -)$. It is exact and admits both a left and a right adjoint.
        \item The left adjoint $\alpha_d$ to $\operatorname{cr}_d^{\Z}$ is given by $\alpha_d\colon M\mapsto \TensorPower{\ab}{d} \otimes_{\SymGroup[d]} M$. This functor is exact and preserves projectives. Moreover, the unit $\Id \to \operatorname{cr}_d^{\Z}\circ\alpha_d$ is an isomorphism.
        \item The right adjoint $\beta_d$ to $\operatorname{cr}_d^{\Z}$ is exact and preserves injectives. Moreover, the counit $\operatorname{cr}_d^{\Z}\circ\beta_d\to \Id$ is an isomorphism.
    \end{enumerate}
\end{prop}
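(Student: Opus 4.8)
The plan is to recognize the statement as the instance for $\Catgr$ of the Djament--Vespa recollement theorem \cite[Théorème 3.2]{DV15}, and to assemble it from two formal inputs --- that $\CatPolyFunc{d}$ sits inside $\CatFuncGr$ as a subcategory closed under all limits and colimits, together with the adjoint functor theorem --- plus a single honest computation of the top cross-effect of $\TensorPower{\ab}{d}$. A useful preliminary observation is that $\Q[\SymGroup[d]]$ is semisimple (we work over $\Q$, characteristic $0$), so every short exact sequence of $\SymGroup[d]$-representations splits; consequently \emph{every} additive functor out of $\Q[\SymGroup[d]]$-modules is automatically exact, which disposes at once of all the exactness assertions for $\alpha_d$ and $\beta_d$.

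For part (1), I would first check that $\CatPolyFunc{d} = \ker(\operatorname{cr}_{d+1})$ is closed in $\CatFuncGr$ under arbitrary limits and colimits. This holds because $\operatorname{cr}_{d+1}$ is exact (as recalled in the excerpt) and is built from evaluations at the objects $\FreeGroup[n]$ and finite direct sums, all of which preserve limits and colimits; hence $\operatorname{cr}_{d+1}$ is continuous and cocontinuous, and its kernel is closed under the corresponding (co)limits. Since $\CatFuncGr$ and $\CatPolyFunc{d}$ are locally presentable, the fully faithful inclusion $\operatorname{incl}$ then admits both a left adjoint $\LeftAdjq{d}$ and a right adjoint $\mathbf{p}_d$ by the adjoint functor theorem; concretely $\mathbf{p}_d F$ is the largest polynomial-of-degree-$\leq d$ subfunctor of $F$ and $\LeftAdjq{d} F$ its largest such quotient. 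The claimed isomorphisms of the counit $\LeftAdjq{d}\circ\operatorname{incl}\to\Id$ and of the unit $\Id\to\mathbf{p}_d\circ\operatorname{incl}$ are then automatic from full faithfulness of $\operatorname{incl}$, a reflective (resp. coreflective) full subcategory having invertible counit (resp. unit).

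For parts (2)--(4), exactness of $\operatorname{cr}_d\colon\CatPolyFunc{d}\to\Q[\SymGroup[d]]$ is immediate by composing the given exactness of the cross-effect with the exact evaluation at $(\Z,\dots,\Z)$. The same (co)continuity argument shows $\operatorname{cr}_d$ preserves all limits and colimits, so the adjoint functor theorem supplies both a left adjoint $\alpha_d$ and a right adjoint $\beta_d$ landing in $\CatPolyFunc{d}$; their exactness is free by semisimplicity, and each preserves projectives (resp. injectives) because it is adjoint to the exact functor $\operatorname{cr}_d$. It remains to identify the left adjoint and to verify the (co)unit isomorphisms. I would compute directly that $\operatorname{cr}_d(\TensorPower{\ab}{d})\cong\Q[\SymGroup[d]]$ as an $(\SymGroup[d],\SymGroup[d])$-bimodule: the $d$-th cross-effect of $(\Q^n)^{\otimes d}$ picks out the multilinear part $\bigoplus_{\sigma\in\SymGroup[d]}\Q\cdot(e_{\sigma(1)}\otimes\cdots\otimes e_{\sigma(d)})$, the regular bimodule, with one action permuting the tensor slots and the other permuting the variables. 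Next I would identify $\Hom_{\CatPolyFunc{d}}(\TensorPower{\ab}{d},-)$ with $\operatorname{cr}_d(-)$ as functors to $\Q[\SymGroup[d]]$-representations; this corepresentability of the top cross-effect by $\TensorPower{\ab}{d}$ is the key input I would extract from \cite{HPV15, DV15, PV18}. Combined with the tensor--hom adjunction over $\Q[\SymGroup[d]]$ it yields $\alpha_d\cong\TensorPower{\ab}{d}\otimes_{\SymGroup[d]}-$ and the unit isomorphism $\Id\to\operatorname{cr}_d\circ\alpha_d$, since $\operatorname{cr}_d\alpha_d(M)\cong\Q[\SymGroup[d]]\otimes_{\SymGroup[d]}M\cong M$ (reducing to $M=\Q[\SymGroup[d]]$ by semisimplicity). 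The counit isomorphism $\operatorname{cr}_d\circ\beta_d\to\Id$ follows by dual bookkeeping, or formally from the fact that $\alpha_d$ and $\beta_d$ are fully faithful sections of the Serre-quotient functor $\operatorname{cr}_d$, whose kernel on $\CatPolyFunc{d}$ is exactly $\CatPolyFunc{d-1}$ (a degree-$\leq d$ functor with vanishing $d$-th cross-effect being multi-additive, hence of degree $\leq d-1$).

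The main obstacle I expect is the identification of the left adjoint with $\TensorPower{\ab}{d}\otimes_{\SymGroup[d]}-$: both the computation $\operatorname{cr}_d\TensorPower{\ab}{d}\cong\Q[\SymGroup[d]]$ and the corepresentability $\Hom(\TensorPower{\ab}{d},-)\cong\operatorname{cr}_d$ require care with the two commuting $\SymGroup[d]$-actions, and one must confirm that the explicit formula genuinely lands in $\CatPolyFunc{d}$. The latter holds because any $\SymGroup[d]$-module is, by semisimplicity, a summand of a free one, so $\alpha_d M$ is a summand of a direct sum of copies of the degree-$d$ functor $\TensorPower{\ab}{d}$, hence polynomial of degree $\leq d$. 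Everything else is either formal category theory or a consequence of characteristic-$0$ semisimplicity, and for the polynomial-functor bookkeeping underlying these identifications I would rely on \cite{HPV15, DV15, PV18}.
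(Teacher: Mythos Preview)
The paper does not actually give a proof of this proposition: immediately after the statement it simply writes ``The proofs of these properties can be found in \cite[Théorème 3.2]{DV15} and in \cite[Theorem 6.3]{PV18}.'' So there is nothing to compare against beyond those citations.

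Your outline is consistent with the approach of those references and is correct in its essentials. A couple of places could be tightened. First, once you have the corepresentability $\Hom_{\CatPolyFunc{d}}(\TensorPower{\ab}{d},-)\cong\operatorname{cr}_d(-)$ (which is indeed the key computation in \cite{HPV15, PV18}), the adjunction $\alpha_d\dashv\operatorname{cr}_d$ with $\alpha_d=\TensorPower{\ab}{d}\otimes_{\SymGroup[d]}-$ follows directly, so invoking the adjoint functor theorem for $\alpha_d$ beforehand is redundant. Second, your justification of the counit isomorphism $\operatorname{cr}_d\beta_d\to\Id$ by ``dual bookkeeping'' is a little vague; the clean argument is that $\operatorname{cr}_d$ is a Serre quotient with kernel $\CatPolyFunc{d-1}$, and in any recollement the right adjoint of the quotient functor is automatically fully faithful, which is equivalent to the counit being an isomorphism. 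With those small adjustments your sketch would match the standard proofs in the cited literature.
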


The proofs of these properties can be found in \cite[Théorème 3.2]{DV15} and in \cite[Theorem 4.4]{PV18}. 
The right adjoint $\beta_d$ is harder to describe than the left adjoint $\alpha_d$, though such a description can be found in \cite[\S 7]{PV18}.
In order to simplify the notation, we will usually omit the functor $\operatorname{incl}_d$, as well as the index $d$ in the functors $\alpha_d$ and $\beta_d$, when doing so does not cause confusion.

Recall that the irreducible $\Q\SymGroup[d]$-modules are indexed by the integer partitions $\lambda \vdash d$. We will use $\Specht{\lambda}$ to denote the irreducible module associated to $\lambda$, and we define $|\lambda| := d$. For example $\Specht{(d)}$ denotes the trivial representation of $\SymGroup[d]$, and $\Specht{(1^d)}$ denotes the sign representation.

\begin{cor}[\cite{PV18} Corollary 4.5, Corollary 4.15, Theorem 4.22]
    The simple polynomial functors of degree $d$ are the functors of the form $\alpha\Specht{\lambda}$.

    For every representation $M\in \Q[\SymGroup[d]]\mathrm{-mod}$, the functor $\beta M$ is injective (even considered as an object of $\CatPolyFunc{\mathrm{pol}}$). In fact, $\beta M$ is the injective envelope of $\alpha M$ inside $\CatPolyFunc{\mathrm{pol}}$.
\end{cor}

\begin{rmk}\label{rmk:Schur-Weyl-duality}
    The isomorphism of $\Q\SymGroup[d]$-bimodules $\Q\SymGroup[d] \cong \oplus_{\lambda\vdash d} {(\Specht{\lambda}\boxtimes\Specht{\lambda})}$ induces an isomorphism
    \begin{equation}\label{Schur-Weyl-Fgr}
        \TensorPower{\ab}{d} = \alpha\Q\SymGroup[d] \cong \bigoplus_{\lambda\vdash d}{\alpha\Specht{\lambda}\otimes\Specht{\lambda}},
    \end{equation}
    and therefore by linearity of $\Ext$ we obtain
    \begin{equation}\label{Schur-Weyl-Ext}
        \Ext^k_{\CatPolyFunc{\mathrm{pol}}}(\TensorPower{\ab}{n}, \TensorPower{\ab}{m}) \cong \bigoplus_{\nu\vdash n}\bigoplus_{\lambda\vdash m}{\Ext^k_{\CatPolyFunc{\mathrm{pol}}}(\alpha\Specht{\nu}, \alpha\Specht{\lambda})\otimes (\Specht{\nu}\boxtimes\Specht{\lambda})},
    \end{equation}
    with $\Specht{\nu}\boxtimes\Specht{\lambda}$ denoting the irreducible
    $(\Q\SymGroup[n], \Q\SymGroup[m])$-bimodule\footnote{We use here the equivalence between left $\Q\SymGroup[n]$-modules and right $\Q\SymGroup[n]^{op}$-modules.} associated to $\Specht{\nu}$ and $\Specht{\lambda}$. In other words, computing $\Ext^k_{\CatPolyFunc{\mathrm{pol}}}(\TensorPower{\ab}{n}, \TensorPower{\ab}{m})$ as a $(\Q\SymGroup[n], \Q\SymGroup[m])$-bimodule is equivalent to computing each of the individual $\Ext^k_{\CatPolyFunc{\mathrm{pol}}}(\alpha\Specht{\nu}, \alpha\Specht{\lambda})$.
\end{rmk}

The functors $\LeftAdjq{d}$ define a tower
\[
\begin{tikzcd}
    & & \CatFuncGr \arrow{ld}[swap]{\LeftAdjq{d+1}} \arrow{d}[swap]{\LeftAdjq{d}} \arrow{rd}{\LeftAdjq{d-1}} \arrow{rrrd}{\LeftAdjq{0}} & & &\\
    \ldots \arrow{r}{\LeftAdjq{d+1}} & \CatPolyFunc{d+1} \arrow{r}{\LeftAdjq{d}} & \CatPolyFunc{d} \arrow{r}{\LeftAdjq{d-1}} & \CatPolyFunc{d-1} \arrow{r} & \ldots\arrow{r} & \CatPolyFunc{0}.
\end{tikzcd}
\]
Moreover, for each integer $d$ the ajunction unit $\operatorname{Id}\to \operatorname{incl}_d\circ \LeftAdjq{d}$ is surjective.

\begin{defi}[polynomial filtration]\label{defi:polynomial-filtration}
    For any functor $F\in \CatFuncGr$, the above tower induces a (descending) filtration by defining $F_d := \ker(F\to \LeftAdjq{d-1}F)$. Up to reindexing, this is precisely the \emph{polynomial filtration} described in \cite[Definition 4.10]{PV18} (though their choice of indices makes their filtration an ascending one).
\end{defi}

\begin{defi}\label{defi:composition-factors}
    For any functor $F\in\CatFuncGr$, and any partition $\lambda\vdash d$, let $F^{\lambda}$ be the multiplicity of $\Specht{\lambda}$ in $\operatorname{cr}_d^{\Z}\LeftAdjq{d}F$, so that
    \begin{equation}\label{eq:composition-factors}
        \operatorname{cr}_d^{\Z}\LeftAdjq{d}F = \bigoplus_{\lambda\vdash d}{\Specht{\lambda}\otimes F^{\lambda}}.
    \end{equation}
\end{defi}

It is more commonly the \emph{dimension} $\dim F^{\lambda}$ which is understood to be the multiplicity of $\Specht{\lambda}$; using the vector space $F^{\lambda}$ allows us to keep track of extra structure, hence the abuse of language.

\begin{expl}
    As an illustration, $\operatorname{cr}_d\TensorPower{\ab}{d}$ is the $\Q\SymGroup[d]$-bimodule $\Q\SymGroup[d]$. In this case, we have that $(\TensorPower{\ab}{d})^{\lambda} = \Specht{\lambda}$ is not just a mere vector space, but has the structure of a right $\Q\SymGroup[d]$-module.
\end{expl}

\begin{prop}[\cite{PV18}, Proposition 4.11]\label{prop:filtration-layers}
    The quotients of the filtration defined above are given by
    \begin{equation*}
        F_d/F_{d+1} = \ker(\LeftAdjq{d}F\to\LeftAdjq{d-1}F) = \alpha\operatorname{cr}_d^{\Z}\LeftAdjq{d}F = \bigoplus_{\lambda\vdash d}{\alpha\Specht{\lambda}\otimes F^{\lambda}}.
    \end{equation*}
\end{prop}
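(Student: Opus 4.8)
The plan is to prove the three stated identifications in turn, writing $G := \LeftAdjq{d}F \in \CatPolyFunc{d}$ and letting $\epsilon\colon \alpha\operatorname{cr}_d G \to G$ denote the counit of the adjunction $\alpha \dashv \operatorname{cr}_d$ of Proposition \ref{prop:recollement-diagram-poly}. The first identification, $F_d/F_{d+1} = \ker(\LeftAdjq{d}F \to \LeftAdjq{d-1}F)$, is purely formal: since every map in the Taylor cofiltration is surjective (Definition \ref{defi:Taylor-cofilt}), the defining short exact sequences give $F/F_{d+1} \cong \LeftAdjq{d}F$ and $F/F_d \cong \LeftAdjq{d-1}F$, while $F_{d+1} \subseteq F_d$ because $F \to \LeftAdjq{d-1}F$ factors through $F \to \LeftAdjq{d}F$. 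The third isomorphism theorem then yields $F_d/F_{d+1} \cong \ker(\LeftAdjq{d}F \to \LeftAdjq{d-1}F)$.

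The heart of the argument is the middle identification, $\ker(G \to \LeftAdjq{d-1}G) = \alpha\operatorname{cr}_d G$, which I would deduce by showing that $\epsilon$ is injective with image exactly this kernel. I would first record three structural facts. First, $\alpha\operatorname{cr}_d G$ is semisimple: since $\Q[\SymGroup[d]]$ is semisimple, $\operatorname{cr}_d G$ decomposes as $\bigoplus_\lambda F^\lambda \otimes \Specht{\lambda}$, and exactness of $\alpha$ turns this into a direct sum of the simple functors $\alpha\Specht{\lambda}$ from the Corollary. Second, each such summand is \emph{purely of degree} $d$, meaning $\LeftAdjq{d-1}\alpha M = 0$ for every $M$: for any $B$ of degree $\le d-1$ the adjunction gives $\Hom(\alpha M, B) \cong \Hom(M, \operatorname{cr}_d B) = 0$ since $\operatorname{cr}_d B = 0$, so the universal degree-$(\le d-1)$ quotient $\LeftAdjq{d-1}\alpha M$ vanishes. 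Third, the triangle identity together with the fact that the unit $\Id \to \operatorname{cr}_d \alpha$ is an isomorphism (Proposition \ref{prop:recollement-diagram-poly}(3)) shows that $\operatorname{cr}_d(\epsilon)$ is an isomorphism.

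With these in hand I would argue as follows. Applying the exact functor $\operatorname{cr}_d$ to $0 \to \ker\epsilon \to \alpha\operatorname{cr}_d G \xrightarrow{\epsilon} G$ and using that $\operatorname{cr}_d(\epsilon)$ is injective gives $\operatorname{cr}_d(\ker\epsilon) = 0$; but $\ker\epsilon$ is a subobject of the semisimple functor $\alpha\operatorname{cr}_d G$, and any nonzero such subobject contains a simple summand $\alpha\Specht{\lambda}$, on which $\operatorname{cr}_d$ is nonzero (it recovers $\Specht{\lambda}$), so $\ker\epsilon = 0$ and $\epsilon$ is injective. For the image, write $I = \operatorname{im}\epsilon$. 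The composite $\alpha\operatorname{cr}_d G \xrightarrow{\epsilon} G \to \LeftAdjq{d-1}G$ is a map from a purely-degree-$d$ functor to a functor of degree $\le d-1$, hence zero by the adjunction argument above, giving $I \subseteq \ker(G \to \LeftAdjq{d-1}G)$. Conversely, surjectivity of $\operatorname{cr}_d(\epsilon)$ forces $\operatorname{cr}_d(G/I) = 0$; since the top cross-effect of a functor of degree $\le d$ is additive in each variable, hence determined by its value on the generator $\Z$, this evaluated vanishing upgrades to $G/I$ having degree $\le d-1$. Then $G \to G/I$ and $G \to \LeftAdjq{d-1}G$ are both degree-$(\le d-1)$ quotients of $G$; the universal property of the reflection $\LeftAdjq{d-1}$ makes them factor through one another, so they have the same kernel and $I = \ker(G \to \LeftAdjq{d-1}G)$.

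The last identification, $\alpha\operatorname{cr}_d\LeftAdjq{d}F = \bigoplus_{\lambda\vdash d} F^\lambda \otimes \alpha\Specht{\lambda}$, is then immediate by applying the additive functor $\alpha$ to the defining decomposition $\operatorname{cr}_d\LeftAdjq{d}F = \bigoplus_\lambda F^\lambda \otimes \Specht{\lambda}$ of Definition \ref{defi:composition-factors}. I expect the main obstacle to be the converse inclusion in the middle step: concluding that $G/I$ really has degree $\le d-1$ relies on the fact that evaluating the top cross-effect at $\Z$ already detects its vanishing, which in turn uses that the top cross-effect of a bounded-degree functor is multi-additive. Everything else reduces to the exactness of $\operatorname{cr}_d$, the isomorphy of the unit of $\alpha \dashv \operatorname{cr}_d$, and the semisimplicity of $\Q[\SymGroup[d]]$.
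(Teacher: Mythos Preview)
Your proof is correct. The paper's own argument is much terser: it handles the first equality by ``an easy diagram chase'' (your third-isomorphism-theorem computation) and the last equality by applying $\alpha$ to the isotypic decomposition, exactly as you do, but for the middle equality it simply defers to \cite[Proposition~6.9]{PV18} rather than arguing directly. Your contribution is a self-contained proof of that middle step, extracted from the recollement data of Proposition~\ref{prop:recollement-diagram-poly}: exactness of $\operatorname{cr}_d$, the unit of $\alpha\dashv\operatorname{cr}_d$ being an isomorphism, and the fact that for $H\in\CatPolyFunc{d}$ the vanishing of the evaluated cross-effect $\operatorname{cr}_d H$ forces $H\in\CatPolyFunc{d-1}$. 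Your justification of this last point via multi-additivity of the top cross-effect is valid; alternatively, it is exactly the statement that $\CatPolyFunc{d-1}$ is the kernel of $\operatorname{cr}_d$ in the recollement of \cite[Th\'eor\`eme~3.2]{DV15}, so you could cite that and shorten the step. One minor phrasing point: where you say the two quotients ``factor through one another,'' only the factorisation $G\to\LeftAdjq{d-1}G\to G/I$ uses the universal property of $\LeftAdjq{d-1}$; the other direction is just the inclusion $I\subseteq\ker(G\to\LeftAdjq{d-1}G)$ you had already established. The conclusion is unaffected.
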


The first equality follows from an easy diagram chase, the second equality is proven in the reference, and the last equality follows easily from the definition of the functor $\alpha$.

\begin{defi}
    In the following we will use $F^{[d]} := \operatorname{cr}_d^{\Z}\LeftAdjq{d}F \cong \operatorname{cr}_d^{\Z}(F_d/F_{d+1})$.
\end{defi}
In the introduction we defined $F^{[d]} := (F_d/F_{d+1})$ for simplicity, but this caused us to commit some abuses of notation. Since the composition $\operatorname{cr}_d^{\Z}\circ\alpha_d$ is naturally isomorphic to the identity, we see from the previous proposition that these two definitions are equivalent to each other.

\subsection{Outer functors}
Powell--Vespa introduced in \cite{PV18} the notion of \emph{outer functors}, ie functors $F\in\CatFuncGr$ such that for each $\FreeGroup\in\Catgr$ the action of $\Aut(\FreeGroup)$ on $F(\FreeGroup)$ factors through $\Out(\FreeGroup)$.

\begin{defi}
    The category $\CatOutFuncGr$ is the full subcategory of $\CatFuncGr$ whose objects are the functors $F$ such that for each $\FreeGroup\in\Catgr$ the group of inner automorphisms $\operatorname{Inn}(\FreeGroup)$ acts trivially on $F(\FreeGroup)$.

    We also define the category of \emph{outer polynomial functors} of degree (at most) $d$, denoted $\CatPolyOutFunc{d}$, to be the intersection $\CatPolyFunc{d}\cap\CatOutFuncGr$. Since both categories are full subcategories of $\CatFuncGr$, the category $\CatPolyOutFunc{d}$ is a full subcategory of all of them. We define $\CatPolyOutFunc{\mathrm{pol}}$ similarly.
\end{defi}

\begin{prop}[\cite{PV18} Propositions 9.10 and 9.25]
    The inclusion $\CatOutFuncGr\to\CatFuncGr$ admits a right adjoint $\omega\colon \CatFuncGr\to \CatOutFuncGr$. This functor is left exact, preserves injectives and is left-inverse to the inclusion functor. This inclusion also has a left adjoint.
\end{prop}

The functor $\omega$ is not exact. For example, the functor $\beta\Q\SymGroup[2]$ is a non-trivial extension $0 \to \TensorPower{\ab}{2}\to \beta\Q\SymGroup[2]\to \ab\to 0$, while after applying $\omega$ we have $\omega\beta\Q\SymGroup[2]\cong \TensorPower{\ab}{2} = \omega(\TensorPower{\ab}{2})$, and $\omega(\ab) = \ab\neq 0$ (this example is a variation on \cite[Example 9.21]{PV18}). There is also a recollement diagram for outer functors; we use the following analogue of Proposition \ref{prop:recollement-diagram-poly}.

\begin{prop}[\cite{PV18} Corollary 9.20]\label{prop:recollement-diagram-poly-out}\leavevmode
    \begin{enumerate}
        \item The functors $\LeftAdjq{d}, \mathbf{p}_d$ preserve outer functors, hence they are a left (resp. right) adjoint to the inclusion $\operatorname{incl}_d\colon\CatPolyOutFunc{d}\to\CatOutFuncGr$. Moreover the counit $\LeftAdjq{d}\circ\operatorname{incl}_d\to \Id$ and the unit $\Id\to \mathbf{p}_d\circ\operatorname{incl}_d$ are still isomorphisms.
        \item The functor $\alpha_d$ takes values in $\CatPolyOutFunc{d}$, and therefore this functor is left adjoint to $\operatorname{cr}_d^{\Z}\colon\CatPolyOutFunc{d}\to \Q[\SymGroup[d]]\textrm{-mod}$.
        \item The composition $\omega\beta_d$ is right adjoint to $\operatorname{cr}_d^{\Z}$, and the counit $\operatorname{cr}^{\Z}_d\circ \omega\beta_d \to \Id$ is an isomorphism.
    \end{enumerate}
\end{prop}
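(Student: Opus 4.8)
All three statements are outer analogues of the recollement of Proposition~\ref{prop:recollement-diagram-poly}, and the plan is to obtain each one by restricting the corresponding adjunction on $\CatFuncGr$ to the full subcategory $\CatOutFuncGr$, after checking that the functors involved respect the outer condition. Everything rests on three elementary facts I would record first. (i) The subcategory $\CatOutFuncGr$ is closed under subobjects and quotients in $\CatFuncGr$: a subfunctor or quotient inherits the $\Aut(\FreeGroup)$-action by naturality, and a group acting trivially on a vector space acts trivially on any subspace and any quotient. (ii) The functor $\ab = H_1(-,\Q)$ is outer, since an inner automorphism induces the identity on the abelianization. (iii) By exactness of the cross-effects $\operatorname{cr}_n$, the subcategory $\CatPolyFunc{d}$ is likewise closed under subobjects and quotients. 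I would also note once and for all that $\mathbf{p}_d$ and $\omega$ --- write $R$ for either --- are right adjoints that restrict to the identity on their subcategories (the unit $\Id\to\mathbf{p}_d\operatorname{incl}$, resp.\ the isomorphism $\omega\operatorname{incl}\cong\Id$, being invertible), hence are left exact; the kernel of the counit $RF\to F$ is a subobject of $RF$, so it lies in the subcategory by (i) or (iii), and then the triangle identities force it to vanish. Thus $\mathbf{p}_d F\into F$ and $\omega F\into F$ exhibit the largest subfunctor of $F$ lying in $\CatPolyFunc{d}$, resp.\ $\CatOutFuncGr$.

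For part (1), the unit $F\onto\LeftAdjq{d}F$ is the surjection of the Taylor cofiltration of Definition~\ref{defi:Taylor-cofilt}, so for $F$ outer $\LeftAdjq{d}F$ is a quotient of an outer functor and is therefore outer by (i); dually $\mathbf{p}_d F\into F$ is a subfunctor of an outer functor, hence outer. As both are of degree $\le d$ they land in $\CatPolyOutFunc{d}$. The adjunctions then restrict formally: since $\CatOutFuncGr$, $\CatPolyOutFunc{d}$ and $\CatPolyFunc{d}$ are all full in $\CatFuncGr$, for $F\in\CatOutFuncGr$ and $G\in\CatPolyOutFunc{d}$ one has $\Hom_{\CatPolyOutFunc{d}}(\LeftAdjq{d}F,G)=\Hom_{\CatFuncGr}(F,G)=\Hom_{\CatOutFuncGr}(F,G)$, and symmetrically for $\mathbf{p}_d$; the unit and counit isomorphisms are inherited verbatim from Proposition~\ref{prop:recollement-diagram-poly} because they are evaluated on objects already lying in the subcategories.

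For part (2), the only point is that $\alpha_d M=\TensorPower{\ab}{d}\otimes_{\SymGroup[d]}M$ is outer; its degree is already $\le d$ by Proposition~\ref{prop:recollement-diagram-poly}. Because $\ab$ is outer and $\Aut(\FreeGroup)$ acts diagonally on $\TensorPower{\ab}{d}$, inner automorphisms act trivially on $\TensorPower{\ab}{d}$, hence on its quotient $\TensorPower{\ab}{d}\otimes_{\SymGroup[d]}M$; so $\alpha_d M\in\CatPolyOutFunc{d}$. The adjunction $\alpha_d\dashv\operatorname{cr}_d$ then restricts by fullness exactly as above, the functor $\operatorname{cr}_d$ on $\CatPolyOutFunc{d}$ being the restriction of the one on $\CatPolyFunc{d}$.

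For part (3), I would first check $\omega\beta_d M\in\CatPolyOutFunc{d}$: it is outer because $\omega$ lands in $\CatOutFuncGr$, and by the remark above $\omega\beta_d M\into\beta_d M$ is a subfunctor of $\beta_d M\in\CatPolyFunc{d}$, hence of degree $\le d$ by (iii). Granting this, the adjunction is a two-step chain, for $F\in\CatPolyOutFunc{d}$ and $M\in\Q[\SymGroup[d]]$:
\begin{equation*}
    \Hom_{\CatPolyOutFunc{d}}(F,\omega\beta_d M)=\Hom_{\CatFuncGr}(F,\beta_d M)=\Hom_{\Q[\SymGroup[d]]}(\operatorname{cr}_d F,M),
\end{equation*}
where the first equality combines fullness with the adjunction between the inclusion $\CatOutFuncGr\into\CatFuncGr$ and its right adjoint $\omega$, and the second is the non-outer adjunction $\operatorname{cr}_d\dashv\beta_d$ together with fullness of $\CatPolyFunc{d}$. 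The main obstacle, and the only genuinely non-formal step, is precisely the polynomiality of $\omega\beta_d M$: unlike $\LeftAdjq{d}$ and $\alpha_d$, the functor $\omega$ is not manifestly compatible with the polynomial degree, and the argument depends on identifying $\omega\beta_d M$ with the largest outer subfunctor of $\beta_d M$ so that the degree bound is inherited through closure of $\CatPolyFunc{d}$ under subobjects.
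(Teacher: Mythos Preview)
Your argument is correct. The paper does not supply its own proof of this proposition: it is stated with attribution to \cite[Corollary 11.10]{PV18} and used without further justification, so there is no in-paper proof to compare against. Your approach --- restricting the recollement adjunctions of Proposition~\ref{prop:recollement-diagram-poly} to the full subcategory of outer functors after verifying that $\LeftAdjq{d}$, $\mathbf{p}_d$ and $\alpha_d$ preserve outerness and that $\omega\beta_d$ lands in $\CatPolyOutFunc{d}$ --- is exactly the standard one, and each step is sound. The only point worth a small extra word is your preliminary claim that the counit $\omega F\to F$ is monic: the argument you give (the kernel lies in the reflective subcategory, hence the adjunction bijection forces it to vanish) is valid, and this is precisely what makes $\omega\beta_d M$ a subfunctor of $\beta_d M$ and therefore polynomial of degree $\le d$. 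With that in hand, part~(3) follows formally from the chain of adjunctions you write, and parts~(1)--(2) are immediate from closure of $\CatOutFuncGr$ under subobjects and quotients together with the outerness of $\ab$.
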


As with the category of all functors, we obtain
\begin{cor}\label{cor:simple-and-injective-outer}
    The simple outer polynomial functors of degree $d$ are the functors of the form $\alpha\Specht{\lambda}$ for all partitions $\lambda\vdash d$, and for any $M\in \Q[\SymGroup[d]]\mathrm{-mod}$, the functor $\omega\beta M$ is the injective envelope of $\alpha M$ inside $\CatPolyOutFunc{\mathrm{pol}}$.
\end{cor}

\begin{prop}\label{prop:outer-functors-co-complete}
    The category $\Foutpol$ has enough injectives. Moreover, for any positive integer $d$, the category $\CatPolyOutFunc{d}$ has enough injectives and enough projectives.
\end{prop}

\begin{proof}
    It was proved in \cite[Proposition 9.22]{PV18} that $\Foutpol$ has enough injectives. A similar argument shows that it is also true of the categories $\CatPolyOutFunc{d}$. The fact that $\CatPolyOutFunc{d}$ has enough projectives was not explicitly stated in \cite{PV18}, but can easily be obtained from their results and is similar in spirit to \cite[Corollary 9.26]{PV18}: it is a standard result that $\CatPolyFunc{d}$ has enough projectives \cite[Proposition A.9]{PV18}, and then a family of projective generators of $\CatPolyOutFunc{d}$ is obtained by applying the left adjoint to the inclusion functor $\CatPolyOutFunc{d}\to \CatPolyFunc{d}$ to a family of projective generators of $\CatPolyFunc{d}$.
\end{proof}

\section{The spectral sequence for \texorpdfstring{$\CatPolyFunc{\mathrm{pol}}$}{Fpol(gr)}}\label{sec:general-functors}

In this section we will discuss a spectral sequence computing $\Ext^*_{\CatPolyFunc{\mathrm{pol}}}(\alpha \Specht{\nu}, \alpha \Specht{\lambda})$ for simple functors $\alpha \Specht{\nu}, \alpha \Specht{\lambda}$. The results obtained are essentially a reformulation of the Koszul property for $\Ext^*_{\CatPolyFunc{\mathrm{pol}}}(\TensorPower{\ab}{n},\TensorPower{\ab}{m})$, as discussed in \cite{Pow21} in the context of \emph{contravariant analytic} functors; the main purpose of this section is to familiarize the reader with the spectral sequence before getting to the more complicated case of the corresponding spectral sequence for $\CatPolyOutFunc{\mathrm{pol}}$ in the next section.

\begin{lem}\label{lem:equal-Exts}
    For any polynomial functors $F, G\in \CatPolyFunc{d}$, there exist isomorphisms
    \[
        \Ext^*_{\CatPolyFunc{\mathrm{pol}}}(F, G) \cong \Ext^*_{\CatPolyFunc{d}}(F, G) \cong \Ext^*_{\CatFuncGr}(F, G).
    \]
\end{lem}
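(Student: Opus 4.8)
The strategy is the standard one for comparing Ext groups across a chain of fully faithful subcategory inclusions: use the adjoint functors from the recollement diagrams to resolve the source or the target appropriately. Recall we have inclusions $\CatPolyFunc{d}\subset\CatPolyFunc{d+1}\subset\cdots\subset\CatPolyFunc{\mathrm{pol}}\subset\CatFuncGr$, each fully faithful and exact, and by Proposition \ref{prop:recollement-diagram-poly}(1) each inclusion $\operatorname{incl}\colon\CatPolyFunc{d}\into\CatFuncGr$ has an exact left adjoint $\LeftAdjq{d}$ (it is exact because $\operatorname{cr}_{d+1}$ is exact and $\LeftAdjq{d}$ is built from cross-effects; more directly, the recollement asserts it). The same applies with $\CatFuncGr$ replaced by any intermediate $\CatPolyFunc{e}$ with $e\geq d$, and by $\CatPolyFunc{\mathrm{pol}}$ (taking the union). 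Since $\LeftAdjq{d}$ is exact and left adjoint to an exact fully faithful functor, and the counit $\LeftAdjq{d}\circ\operatorname{incl}\to\Id$ is an isomorphism, $\LeftAdjq{d}$ sends projectives of the big category to projectives of $\CatPolyFunc{d}$.

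First I would show $\Ext^*_{\CatPolyFunc{d}}(F,G)\cong\Ext^*_{\CatFuncGr}(F,G)$ for $F,G\in\CatPolyFunc{d}$. Take a projective resolution $P_\bullet\to F$ in $\CatFuncGr$ (the big category has enough projectives). Apply the exact functor $\LeftAdjq{d}$ to get a complex $\LeftAdjq{d}P_\bullet\to\LeftAdjq{d}F\cong F$, which is a projective resolution of $F$ in $\CatPolyFunc{d}$ by the preceding remarks (exactness of $\LeftAdjq{d}$ guarantees it stays acyclic, and the $\LeftAdjq{d}P_i$ are projective). Now compute:
\[
    \Ext^*_{\CatPolyFunc{d}}(F,G) = H^*\big(\Hom_{\CatPolyFunc{d}}(\LeftAdjq{d}P_\bullet, G)\big) \cong H^*\big(\Hom_{\CatFuncGr}(P_\bullet, \operatorname{incl} G)\big) = \Ext^*_{\CatFuncGr}(F,G),
\]
where the middle isomorphism is the adjunction $\LeftAdjq{d}\dashv\operatorname{incl}$, applied degreewise and compatibly with differentials. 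The same argument verbatim, replacing $\CatFuncGr$ by $\CatPolyFunc{\mathrm{pol}}$ (which also has enough projectives and to which $\LeftAdjq{d}$ restricts as a left adjoint to the inclusion — this follows since $\LeftAdjq{d}$ already lands in $\CatPolyFunc{d}\subset\CatPolyFunc{\mathrm{pol}}$), gives $\Ext^*_{\CatPolyFunc{d}}(F,G)\cong\Ext^*_{\CatPolyFunc{\mathrm{pol}}}(F,G)$, completing the chain.

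The main point requiring care — and the step I would flag as the real content rather than a formality — is the claim that $\LeftAdjq{d}$ preserves projectives and that $\LeftAdjq{d}P_\bullet$ remains exact; both rest on $\LeftAdjq{d}$ being exact, which is part of the recollement package of Proposition \ref{prop:recollement-diagram-poly} but deserves an explicit citation. One should also note that enough projectives in $\CatFuncGr$ is standard (representable functors $\Q[\Hom_{\Catgr}(\FreeGroup,-)]$ and their direct sums are projective generators), and that $F,G$ being genuinely in $\CatPolyFunc{d}$ — as opposed to merely polynomial of some larger degree — is what makes $\LeftAdjq{d}F\cong F$ and lets the adjunction isomorphism close the loop. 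Everything else is bookkeeping with the functoriality of the adjunction isomorphism in the chain complex variable.
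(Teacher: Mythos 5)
Your adjunction bookkeeping (the counit isomorphism $\LeftAdjq{d}F\cong F$, the fact that a left adjoint of an exact functor preserves projectives, the degreewise identification of Hom complexes) is fine, but both load-bearing inputs underneath it are gaps. For the comparison with $\CatFuncGr$ you assert that $\LeftAdjq{d}\colon\CatFuncGr\to\CatPolyFunc{d}$ is exact, offering as justification that ``the recollement asserts it'' or that it is built from the exact functor $\operatorname{cr}_{d+1}$. Neither holds up: Proposition \ref{prop:recollement-diagram-poly} asserts exactness of $\operatorname{cr}_d$, $\alpha_d$ and $\beta_d$, but only the \emph{existence} of $\LeftAdjq{d}$, and a functor defined as a cokernel of a map built from cross-effects is right exact (as any left adjoint is), not exact. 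The exactness invoked in the paper, via \cite[Proposition 6.11]{PV18}, is only for the restriction of $\LeftAdjq{d}$ to $\CatPolyFunc{\mathrm{pol}}$. What your argument actually requires is that $\LeftAdjq{d}P_\bullet\to F$ remain acyclic, i.e.\ that $L_i\LeftAdjq{d}F=0$ for $i>0$ when $F$ is polynomial of degree at most $d$; this is essentially equivalent to the isomorphism you are trying to prove, and it is precisely why the paper disposes of this half of the statement by citing \cite[Th\'eor\`eme 1]{DPV16} rather than by a formal adjunction argument.

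For the comparison between $\CatPolyFunc{d}$ and $\CatPolyFunc{\mathrm{pol}}$ you assume that $\CatPolyFunc{\mathrm{pol}}$ has enough projectives, but the paper explicitly notes in the proof of Proposition \ref{prop:description-spectral-sequence} that a polynomial functor need not admit a projective resolution in $\CatPolyFunc{\mathrm{pol}}$: projectivity is not inherited along the inclusions $\CatPolyFunc{d}\into\CatPolyFunc{d+1}$, since their right adjoints $\mathbf{p}_d$ are not exact, so the projective generators of $\CatPolyFunc{d}$ do not stay projective in the union. Your projective-resolution argument for this half therefore never gets off the ground. The repair is to dualize, which is what the paper does: the inclusion $\CatPolyFunc{d}\into\CatPolyFunc{\mathrm{pol}}$ is exact and preserves injectives, because its left adjoint $\LeftAdjq{d}$ \emph{is} exact on polynomial functors by \cite[Proposition 6.11]{PV18}; hence an injective resolution of $G$ in $\CatPolyFunc{d}$ is also one in $\CatPolyFunc{\mathrm{pol}}$, and full faithfulness identifies the two Hom complexes.
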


\begin{proof}
    The second isomorphism is \cite[Théorème 1]{DPV16}. The first isomorphism is a consequence of the fact that the inclusion $\CatPolyFunc{d}\into \CatPolyFunc{\mathrm{pol}}$ is exact and preserves injectives, since its left adjoint $\LeftAdjq{d}$ is an exact functor \cite[Proposition 4.9]{PV18}.
    Therefore $G$ admits the same injective resolution in both categories.
\end{proof}

\begin{rmk}\label{rmk:vanishing-Ext-Vespa}
    In particular, this Lemma combined with \cite[Theorem 1]{Ves18} and the isomorphism \eqref{Schur-Weyl-Ext} from Remark \ref{rmk:Schur-Weyl-duality} implies that $\Ext^k_{\CatPolyFunc{\mathrm{pol}}}(\alpha \Specht{\nu}, \alpha \Specht{\lambda})$ is trivial unless $k = |\lambda| - |\nu|\geq 0$.
\end{rmk}

\begin{prop}\label{prop:description-spectral-sequence}
    Let $F, G\in \CatPolyFunc{\mathrm{pol}}$ be two polynomial functors, then there exists a spectral sequence
    \begin{equation}\label{eq:spectral-sequence-general}
        E_1^{p,q} = \bigoplus_{\rho\vdash p}{\Ext^{p+q}_{\CatPolyFunc{\mathrm{pol}}}(F, \alpha\Specht{\rho})\otimes G^{\rho}} \Rightarrow \Ext^{p+q}_{\CatPolyFunc{\mathrm{pol}}}(F, G).
    \end{equation}
\end{prop}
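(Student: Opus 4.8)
The plan is to realize the spectral sequence as the one attached to a finite filtration of the second argument $G$, namely the polynomial (Taylor) filtration of Definition \ref{defi:Taylor-cofilt}. Write $G = G_0\supseteq G_1\supseteq G_2\supseteq\cdots$, with $G_p = \ker(G\to\LeftAdjq{p-1}G)$. Two preliminary remarks make the standard machinery applicable. First, the filtration is finite: being polynomial, $G$ lies in $\CatPolyFunc{N}$ for some $N$, and the counit isomorphism $\LeftAdjq{N}\circ\operatorname{incl}\cong\Id$ of Proposition \ref{prop:recollement-diagram-poly} gives $G\cong\LeftAdjq{N}G$, whence $G_{N+1}=0$. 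Second, each $G_p$ is a subfunctor of the polynomial functor $G$, hence itself polynomial (cross-effects are exact, so $\CatPolyFunc{N}$ is closed under subobjects); thus all functors appearing below lie in $\CatPolyFunc{\mathrm{pol}}$, and the $\Ext$-groups in question may be computed by injective resolutions, e.g. inside $\CatPolyFunc{N}$, which has enough injectives and which by Lemma \ref{lem:equal-Exts} computes the same $\Ext$-groups as $\CatPolyFunc{\mathrm{pol}}$.

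I would then apply $\Ext^*_{\CatPolyFunc{\mathrm{pol}}}(F,-)$ to the short exact sequences $0\to G_{p+1}\to G_p\to G_p/G_{p+1}\to 0$. The associated long exact sequences assemble into an exact couple with
\[
    D_1^{p,q}=\Ext^{p+q}_{\CatPolyFunc{\mathrm{pol}}}(F,G_p),\qquad E_1^{p,q}=\Ext^{p+q}_{\CatPolyFunc{\mathrm{pol}}}(F,G_p/G_{p+1}),
\]
the structure maps being the two maps of each short exact sequence together with the connecting homomorphism. This is exactly the cohomological spectral sequence of the filtered object $G$, with differentials $d_r\colon E_r^{p,q}\to E_r^{p+r,q-r+1}$. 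Since the filtration of $G$ is finite (and $\Ext^{p+q}$ vanishes for $p+q<0$), only finitely many entries are nonzero in each total degree, so the spectral sequence converges, with finite filtration on the abutment, to $\Ext^{p+q}_{\CatPolyFunc{\mathrm{pol}}}(F,G_0)=\Ext^{p+q}_{\CatPolyFunc{\mathrm{pol}}}(F,G)$.

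It remains to identify the $E_1$-page. By Proposition \ref{prop:filtration-layers} the $p$-th graded piece is $G_p/G_{p+1}=\bigoplus_{\rho\vdash p}G^{\rho}\otimes\alpha\Specht{\rho}$, with $G^{\rho}$ as in Definition \ref{defi:composition-factors}. As $\Ext^*_{\CatPolyFunc{\mathrm{pol}}}(F,-)$ is additive, commuting with the finite direct sum over $\rho\vdash p$ and with the multiplicity spaces $G^{\rho}$, this gives
\[
    E_1^{p,q}=\bigoplus_{\rho\vdash p}G^{\rho}\otimes\Ext^{p+q}_{\CatPolyFunc{\mathrm{pol}}}(F,\alpha\Specht{\rho}),
\]
which is the asserted description.

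No step presents a genuine difficulty; the care needed is entirely bookkeeping: confirming that the polynomial filtration is honestly finite and remains inside $\CatPolyFunc{\mathrm{pol}}$, and fixing the degree conventions of the exact couple so that the filtration degree $p$ is indexed by partitions of $p$ and the abutment sits in total degree $p+q$. As an alternative to the exact couple one could splice injective resolutions of the graded pieces $G_p/G_{p+1}$ via the horseshoe lemma along the finite filtration to obtain a filtered injective resolution of $G$, and read the same spectral sequence off the filtered complex $\Hom_{\CatPolyFunc{\mathrm{pol}}}(F,I^\bullet)$; this requires the same verifications.
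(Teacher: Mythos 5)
Your argument is correct and follows the same route as the paper: both construct the spectral sequence of the finite polynomial filtration $G_p = \ker(G\to\LeftAdjq{p-1}G)$ and identify the $E_1$-page via Proposition \ref{prop:filtration-layers} together with additivity of $\Ext$. The only difference is packaging: the paper filters the complex $\Hom(P^\bullet, G)$ for a projective resolution $P^\bullet\onto F$, which forces a workaround (passing to $\CatPolyFunc{d}$ for $d$ large, since $\CatPolyFunc{\mathrm{pol}}$ lacks enough projectives), whereas your exact couple assembled from the long exact sequences of $\Ext_{\CatPolyFunc{\mathrm{pol}}}(F,-)$ sidesteps that issue entirely.
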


This is a cohomologically graded spectral sequence, hence the differential $d_r$ has degree $(r, 1-r)$. The notation $G^{\rho}$ was defined in Definition \ref{defi:composition-factors}. The first page of this spectral sequence can equivalently be written $E_1^{p,q} = \Ext^{p+q}_{\CatPolyFunc{\mathrm{pol}}}(F, \TensorPower{\ab}{p}) \otimes_{\SymGroup[p]} G^{[p]}$. 

\begin{proof}
    This is essentially the spectral sequence of a filtered complex (see for example \cite[Theorem 5.5.1]{Weibel94}), for the filtered complex $\Hom_{\CatPolyFunc{\mathrm{pol}}}(P^k, G_d)$ associated to a projective resolution $P^{\bullet}\onto F$ and the polynomial filtration on $G$ (Definition \ref{defi:polynomial-filtration}). The only problem is that in general a polynomial functor $F$ does not admit a projective resolution in $\CatPolyFunc{\mathrm{pol}}$. Instead we construct the spectral sequence in $\CatPolyFunc{n}$ for some $n$ large enough, and Lemma \ref{lem:equal-Exts} ensures that the resulting spectral sequence does not depend on the choice of $n$.

    We obtain a $E_1$-page of the form
    \begin{equation*}
        E_1^{p,q} = \Ext^{p+q}_{\CatPolyFunc{\mathrm{pol}}}(F, G_p/G_{p+1})
    \end{equation*}
    and this gives us the expression \eqref{eq:spectral-sequence-general}, using Proposition \ref{prop:filtration-layers} and the linearity properties of $\Ext$.
\end{proof}

We will use this spectral sequence in the following case:

\begin{expl}\label{expl:main-example-non-out}
    Consider the functors $F = \alpha \Specht{\nu}, G = \beta \Specht{\lambda}$. Since $\beta \Specht{\lambda}$ is the injective envelope of $\alpha \Specht{\lambda}$ in $\CatPolyFunc{\mathrm{pol}}$, we have that $\Ext^{p+q}_{\CatPolyFunc{\mathrm{pol}}}(\alpha \Specht{\nu}, \beta\Specht{\lambda})$ is trivial unless $\nu = \lambda$ and $p+q=0$. On the other hand, as pointed out in Remark \ref{rmk:vanishing-Ext-Vespa}, we have that $\Ext^{p+q}_{\CatPolyFunc{\mathrm{pol}}}(\alpha\Specht{\nu}, \alpha\Specht{\rho})$ is trivial unless $p+q = |\rho| - |\nu|$. Since $|\rho| = p$ this means that the spectral sequence is concentrated on the line $q = -|\nu|$ and hence degenerates after the first page.
\end{expl}

Continuing the above example, note that the coefficients $(\beta\Specht{\lambda})^{\rho}$ have been determined in \cite[Corollary 11.23]{PV18}.
For the following discussion we will only use that when $|\rho| = |\lambda|$, then $(\beta \Specht{\lambda})^{\rho}$ is trivial unless $\lambda = \rho$, in which case it has rank $1$.

These properties imply that if we consider all the chain complexes associated to all partitions $\nu$ and $\lambda$, they assemble into a ``triangular'' equation system for the ranks of $\Ext^{|\rho|-|\nu|}_{\CatPolyFunc{\mathrm{pol}}}(\alpha\Specht{\nu}, \alpha\Specht{\rho})$, whose coefficients are the ranks of $(\beta\Specht{\lambda})^{\rho}$, with $\rho$ ranging over all partitions $|\nu|\leq |\rho|\leq |\lambda|$. As a consequence, we can compute the ranks of $\Ext^{|\lambda|-|\nu|}_{\CatPolyFunc{\mathrm{pol}}}(\alpha\Specht{\nu}, \alpha\Specht{\lambda})$ by an induction on $m = |\lambda|$, provided we have already computed the ranks of all $(\beta\Specht{\lambda})^{\rho}$. Conversely, it is also possible to compute the ranks of each $(\beta\Specht{\lambda})^{\rho}$ from the knowledge of the ranks of each $\Ext^*_{\CatPolyFunc{\mathrm{pol}}}(\alpha\Specht{\nu}, \alpha\Specht{\lambda})$; this type of computation will be discussed in more detail in Theorem \ref{thm:determine-composition-factors}.

More conceptually, using the isomorphism \eqref{Schur-Weyl-Ext} from Remark \ref{rmk:Schur-Weyl-duality} to assemble all these chain complexes into a single chain complex of $(\Q\SymGroup[\bullet], \Q\SymGroup[\bullet])$-bimodules, we obtain an expression exhibiting the algebra constructed from\footnote{The following isomorphism is obtained from the adjunction properties of the functors involved, and the fact that finite-dimensional representations of symmetric groups over $\Q$ are self-dual.} $\Hom_{\CatPolyFunc{\mathrm{pol}}}(\beta\Q\SymGroup[m], \beta\Q\SymGroup[n])\cong (\beta\Q\SymGroup[m])^{[n]}$ as Koszul dual to the algebra 
\begin{equation*}
    \bigoplus_{m,n\in\N}{\Ext^{m-n}_{\CatPolyFunc{\mathrm{pol}}}(\TensorPower{\ab}{m}, \TensorPower{\ab}{n})},
\end{equation*}
whose product is induced from the Yoneda product. We will not develop in more detail this Koszul property here, and refer instead the interested reader to \cite[\S 8.1 and Thm 6.22]{Pow21}.

\section{The spectral sequence for \texorpdfstring{$\CatPolyOutFunc{\mathrm{pol}}$}{Foutpol(gr)}}\label{sec:outer-functors}
We now turn to the main goal of this paper, which is the study of $\Ext^*_{\CatPolyOutFunc{\mathrm{pol}}}(\alpha\Specht{\nu}, \alpha\Specht{\lambda})$. We first state the counterpart to Lemma \ref{lem:equal-Exts}, which is proved similarly.

\begin{lem}
    For any outer polynomial functors $F, G\in \CatPolyOutFunc{d}$, there exists an isomorphism
    \begin{equation*}
        \Ext^*_{\Foutpol}(F, G) \cong \Ext^*_{\CatPolyOutFunc{d}}(F, G).
    \end{equation*}
\end{lem}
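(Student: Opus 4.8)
The plan is to mimic the proof of the first isomorphism in Lemma~\ref{lem:equal-Exts}, with the pair $(\CatPolyFunc{\mathrm{pol}}, \CatPolyFunc{d})$ replaced throughout by $(\mathcal{F}, \CatPolyOutFunc{d})$, where $\mathcal{F} = \CatPolyOutFunc{\mathrm{pol}}$ is the ambient category of all polynomial outer functors into which $\CatPolyOutFunc{d}$ embeds as a full subcategory. The essential point is that $\Ext^*$ in either category is computed from a \emph{single} injective resolution of $G$, and that one and the same resolution will serve in both. Note that, unlike the non-outer statement of Lemma~\ref{lem:equal-Exts}, there is no third term here comparing against the genuinely ambient category $\CatOutFuncGr$: the missing isomorphism would be the analogue of the one furnished by \cite[Théorème 1]{DPV16}, which has no counterpart for outer functors and is precisely the kind of comparison one expects to fail in the outer setting.

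First I would record the two structural facts about the inclusion $\operatorname{incl}\colon\CatPolyOutFunc{d}\into\mathcal{F}$. By Proposition~\ref{prop:outer-functors-co-complete} both categories are abelian with enough injectives, and kernels and cokernels in each are computed as in $\CatFuncGr$; in particular $\operatorname{incl}$ is exact. By Proposition~\ref{prop:recollement-diagram-poly-out}(1) the functor $\LeftAdjq{d}$ preserves outer functors and is left adjoint to $\CatPolyOutFunc{d}\into\CatOutFuncGr$; restricting its source to the full subcategory $\mathcal{F}$ exhibits $\LeftAdjq{d}$ as left adjoint to $\operatorname{incl}$. This functor is exact: exactness on $\CatFuncGr$ is \cite[Proposition 6.11]{PV18}, and since a short exact sequence in $\mathcal{F}$ (resp.\ $\CatPolyOutFunc{d}$) is one in $\CatFuncGr$ by Proposition~\ref{prop:outer-functors-co-complete}, exactness is inherited on the outer subcategories. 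A right adjoint whose left adjoint is exact preserves injective objects, so $\operatorname{incl}$ sends injectives of $\CatPolyOutFunc{d}$ to injectives of $\mathcal{F}$.

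With these two facts in hand the comparison is immediate. I would choose an injective resolution $G\to I^{\bullet}$ in $\CatPolyOutFunc{d}$, which exists by Proposition~\ref{prop:outer-functors-co-complete}. Since $\operatorname{incl}$ is exact, $G\to I^{\bullet}$ remains a resolution in $\mathcal{F}$, and since $\operatorname{incl}$ preserves injectives each $I^{j}$ is injective in $\mathcal{F}$; hence $I^{\bullet}$ is an injective resolution of $G$ in $\mathcal{F}$ as well. Because $\CatPolyOutFunc{d}$ is a \emph{full} subcategory of $\mathcal{F}$, we have $\Hom_{\CatPolyOutFunc{d}}(F, I^{j}) = \Hom_{\mathcal{F}}(F, I^{j})$ for every $j$, so the two cochain complexes computing the respective $\Ext$ groups agree on the nose. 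Taking cohomology yields $\Ext^*_{\CatPolyOutFunc{d}}(F,G)\cong\Ext^*_{\mathcal{F}}(F,G)$, as claimed.

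The only step needing genuine care is the exactness of $\LeftAdjq{d}$ on outer functors, and the consequent claim that $\operatorname{incl}$ preserves injectives; everything else is formal diagram chasing. I expect this to be the main (if mild) obstacle, and it is resolved by combining the recollement of Proposition~\ref{prop:recollement-diagram-poly-out} with the observation from Proposition~\ref{prop:outer-functors-co-complete} that exactness in the outer polynomial categories may be checked after applying the exact inclusion into $\CatFuncGr$.
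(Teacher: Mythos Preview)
Your proof is correct and follows essentially the same approach as the paper, which simply states that the proof of the first isomorphism in Lemma~\ref{lem:equal-Exts} carries over verbatim: the inclusion $\CatPolyOutFunc{d}\into\mathcal{F}$ is exact and preserves injectives because its left adjoint $\LeftAdjq{d}$ is exact, so $G$ admits the same injective resolution in both categories. Your additional care in deducing exactness of $\LeftAdjq{d}$ on outer functors from Proposition~\ref{prop:outer-functors-co-complete} is a welcome elaboration of a point the paper leaves implicit.
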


\begin{rmk}
    It is worth emphasizing however that it is not known whether the comparison map $\Ext^*_{\CatPolyOutFunc{d}}(F, G)\to \Ext^*_{\CatOutFuncGr}(F, G)$ is an isomorphism. This is why we restrict ourselves to working in the category of polynomial outer functors instead of the category of all outer functors.
\end{rmk}

Using the same arguments as for Proposition \ref{prop:description-spectral-sequence}, we obtain

\begin{prop}\label{prop:spectral-sequence-outer}
    Let $F, G\in \Foutpol$ be two outer polynomial functors. There exists a spectral sequence
    \begin{equation}\label{eq:spec-seq-outer}
        E_1^{p,q} = \bigoplus_{\rho\vdash p}{\Ext^{p+q}_{\Foutpol}(F, \alpha\Specht{\rho})\otimes G^{\rho}} \Rightarrow \Ext^{p+q}_{\Foutpol}(F, G).
    \end{equation}
\end{prop}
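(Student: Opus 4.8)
The plan is to imitate verbatim the proof of Proposition~\ref{prop:description-spectral-sequence}, replacing $\CatPolyFunc{\mathrm{pol}}$ by $\mathcal{F}=\CatPolyOutFunc{\mathrm{pol}}$ throughout and checking that every ingredient used there has an outer counterpart. First I would choose $d$ large enough that both $F$ and $G$ lie in $\CatPolyOutFunc{d}$. By the preceding Lemma, $\Ext^*_{\mathcal{F}}(F,-)$ agrees with $\Ext^*_{\CatPolyOutFunc{d}}(F,-)$ on outer polynomial functors of degree at most $d$, and by Proposition~\ref{prop:outer-functors-co-complete} the category $\CatPolyOutFunc{d}$ has enough projectives; so I would fix a projective resolution $P^{\bullet}\onto F$ in $\CatPolyOutFunc{d}$, build the spectral sequence there, and transfer the answer back to $\mathcal{F}$ at the end.

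Next I would equip $G$ with the polynomial filtration $G=G_0\supseteq G_1\supseteq\cdots$, $G_e=\ker(G\to\LeftAdjq{e-1}G)$, of Definition~\ref{defi:Taylor-cofilt}. Two observations make this work inside $\CatPolyOutFunc{d}$: the functors $\LeftAdjq{e}$ preserve outer functors and polynomiality (Proposition~\ref{prop:recollement-diagram-poly-out}) and kernels in $\CatPolyOutFunc{d}$ coincide with those in $\CatFuncGr$ (Proposition~\ref{prop:outer-functors-co-complete}), so each $G_e$ is again an object of $\CatPolyOutFunc{d}$; and since $G$ is polynomial of degree at most some $N\leq d$, the filtration is finite ($G_{N+1}=0$, using that the counit $\LeftAdjq{N}\circ\operatorname{incl}\to\Id$ is an isomorphism). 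Applying $\Hom_{\CatPolyOutFunc{d}}(P^{\bullet},-)$ then yields a finitely filtered cochain complex, whose associated spectral sequence (e.g.\ \cite[Theorem~5.5.1]{Weibel94}) converges automatically because the filtration is bounded.

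Finally I would identify the $E_1$-page exactly as in Proposition~\ref{prop:description-spectral-sequence}: the associated graded complex in filtration degree $p$ is $\Hom_{\CatPolyOutFunc{d}}(P^{\bullet},G_p/G_{p+1})$, so $E_1^{p,q}=\Ext^{p+q}_{\CatPolyOutFunc{d}}(F,G_p/G_{p+1})$. The outer analogue of Proposition~\ref{prop:filtration-layers} holds, because $\alpha$, $\operatorname{cr}_p$ and $\LeftAdjq{p}$ all restrict to the outer setting and compute the same objects there as in $\CatFuncGr$ (Propositions~\ref{prop:recollement-diagram-poly-out} and~\ref{prop:outer-functors-co-complete}), giving $G_p/G_{p+1}=\alpha\operatorname{cr}_p\LeftAdjq{p}G=\bigoplus_{\rho\vdash p}G^{\rho}\otimes\alpha\Specht{\rho}$ with $G^{\rho}$ as in Definition~\ref{defi:composition-factors}. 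Substituting this in, using linearity of $\Ext$ in the second variable, and passing back from $\CatPolyOutFunc{d}$ to $\mathcal{F}$ via the preceding Lemma produces \eqref{eq:spec-seq-outer}. I do not expect a genuine obstacle here: the argument is entirely formal once one grants Propositions~\ref{prop:recollement-diagram-poly-out} and~\ref{prop:outer-functors-co-complete}, which are precisely what ensure the recollement machinery and the polynomial filtration survive the passage to outer functors; the only point deserving a line of care is the boundedness of the filtration, and that is immediate since $G$ is polynomial.
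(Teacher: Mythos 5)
Your proposal is correct and follows exactly the route the paper intends: the paper's own ``proof'' of Proposition~\ref{prop:spectral-sequence-outer} is literally the remark that the argument of Proposition~\ref{prop:description-spectral-sequence} carries over verbatim, and your write-up simply spells out the verifications (outer recollement, enough projectives, kernels and filtration layers agreeing with those in $\CatFuncGr$, boundedness of the polynomial filtration) that make that transfer legitimate.
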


As in the previous section, this spectral sequence is cohomologically graded, meaning that the differential $d_r$ has degree $(r, 1-r)$. The notation $G^{\rho}$ comes from Definition \ref{defi:composition-factors}, and as in the previous section we can equivalently write $E_1^{p,q} = \Ext^{p+q}_{\Foutpol}(F, \TensorPower{\ab}{p})\otimes_{\SymGroup[p]} G^{[p]}$.

For the rest of this paper we will work in the special case when $F = \alpha\Specht{\nu}$ and $G = \omega\beta\Specht{\lambda}$. We record the following fact for future reference:

\begin{lem}\label{lem:spectral-sequence-vanishes}
    If $\nu \neq \lambda$ are two distinct partitions, then $\Ext^*_{\Foutpol}(\alpha\Specht{\nu},\omega\beta\Specht{\lambda}) = 0$. If $\nu = \lambda$ then $\Ext^0_{\Foutpol}(\alpha\Specht{\lambda},\omega\beta\Specht{\lambda}) = \Q$ and $\Ext^*_{\Foutpol}(\alpha\Specht{\lambda},\omega\beta\Specht{\lambda}) = 0$ for all $*\neq 0$.
\end{lem}

\begin{proof}
    Since $\omega\beta\Specht{\lambda}$ is injective in $\Foutpol$, we immediately obtain that the above expression vanishes in all cohomological degrees $* \neq 0$. In cohomological degree $* = 0$, the result follows from the fact that $\alpha$ is left adjoint to $\operatorname{cr}_d^{\Z}$ and $\operatorname{cr}_d^{\Z}\circ \omega\beta$ is naturally isomorphic to the identity (cf Proposition \ref{prop:recollement-diagram-poly-out}).
\end{proof}

First, standard properties of polynomial functors allow us to determine certain terms on the $E_1$-page of this spectral sequence.

\begin{lem}\label{lem:special-values-E1}
    In the spectral sequence \eqref{eq:spec-seq-outer} associated to $F = \alpha\Specht{\nu}, G = \omega\beta\Specht{\lambda}$, we have an explicit description for the following terms on the $E_1$-page:
    \begin{enumerate}
        \item $E_1^{p,q} = 0$ for all $p > |\lambda|$
        \item $E_1^{p,q} = 0$ if $p < 2$ and $|\lambda|\geq 2$
        \item $E_1^{p,q} = 0$ for all $q < -p$
        \item $E_1^{|\lambda|, q} = \Ext^{|\lambda|+q}_{\Foutpol}(\alpha\Specht{\nu}, \alpha\Specht{\lambda})$.
    \end{enumerate}
\end{lem}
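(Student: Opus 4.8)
The plan is to extract all four statements directly from the explicit $E_1$-term
$E_1^{p,q}=\bigoplus_{\rho\vdash p}(\omega\beta\Specht{\lambda})^{\rho}\otimes\Ext^{p+q}_{\mathcal{F}}(\alpha\Specht{\nu},\alpha\Specht{\rho})$,
recalling that $(\omega\beta\Specht{\lambda})^{\rho}$ is the multiplicity of $\alpha\Specht{\rho}$ as a composition factor of $\omega\beta\Specht{\lambda}$; whenever a multiplicity factor vanishes the whole summand does, so (1), (2) and (3) reduce to the vanishing of either a multiplicity or a negative-degree $\Ext$. Item (3) is immediate: $q<-p$ means $p+q<0$, and $\Ext^{p+q}_{\mathcal{F}}(-,-)=0$ in negative degrees. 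Item (1) follows because $\omega\beta\Specht{\lambda}$ is polynomial of degree $\le|\lambda|$ (it lies in $\CatPolyOutFunc{|\lambda|}$, being the value of $\omega\beta_{|\lambda|}$), and $\CatPolyOutFunc{|\lambda|}$ is closed under subquotients inside $\CatFuncGr$ (exactness of $\operatorname{cr}_{|\lambda|+1}$, together with the fact that $\operatorname{Inn}$-invariance passes to subquotients); hence the composition factors of $\omega\beta\Specht{\lambda}$ are among the $\alpha\Specht{\rho}$ with $|\rho|\le|\lambda|$, so $(\omega\beta\Specht{\lambda})^{\rho}=0$ once $p=|\rho|>|\lambda|$. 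Equivalently one may simply quote the bound on these multiplicities recalled in the introduction.

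For item (4) I would first prove that $\operatorname{cr}_{|\lambda|}(\omega\beta\Specht{\lambda})=\Specht{\lambda}$ and then read off the column $p=|\lambda|$. The socle of the injective envelope $\omega\beta\Specht{\lambda}$ is $\alpha\Specht{\lambda}$, giving an inclusion $\alpha\Specht{\lambda}\hookrightarrow\omega\beta\Specht{\lambda}$; moreover $\omega\beta\Specht{\lambda}\hookrightarrow\beta\Specht{\lambda}$, being the maximal outer subfunctor of $\beta\Specht{\lambda}$. Applying the exact functor $\operatorname{cr}_{|\lambda|}$ to the chain $\alpha\Specht{\lambda}\subseteq\omega\beta\Specht{\lambda}\subseteq\beta\Specht{\lambda}$ and using the unit isomorphism $\operatorname{cr}_{|\lambda|}\alpha\Specht{\lambda}\cong\Specht{\lambda}$ and the counit isomorphism $\operatorname{cr}_{|\lambda|}\beta\Specht{\lambda}\cong\Specht{\lambda}$ from Proposition \ref{prop:recollement-diagram-poly}, both the first and third terms become $\Specht{\lambda}$, so the middle term, sandwiched between two monomorphic copies of the finite-dimensional module $\Specht{\lambda}$, must equal $\Specht{\lambda}$. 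Consequently, for $\rho\vdash|\lambda|$ the multiplicity $(\omega\beta\Specht{\lambda})^{\rho}$ is $\dim\Hom_{\SymGroup[|\lambda|]}(\Specht{\rho},\Specht{\lambda})$, which is $1$ for $\rho=\lambda$ and $0$ otherwise; plugging this into the $E_1$-formula with $p=|\lambda|$ collapses the sum to $\Ext^{|\lambda|+q}_{\mathcal{F}}(\alpha\Specht{\nu},\alpha\Specht{\lambda})$.

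Item (2) is the only place where an input beyond the recollement formalism is needed, and I expect it to be the crux of the proof. For $p\in\{0,1\}$ the only partition of $p$ is $\emptyset$, resp. $(1)$, so everything reduces to showing that $\Q=\alpha\Specht{\emptyset}$ and $\ab=\alpha\Specht{(1)}$ are not composition factors of $\omega\beta\Specht{\lambda}$ when $|\lambda|\ge2$. The key fact, recorded in the introduction, is that $\Q$ and $\ab$ are \emph{projective} in $\mathcal{F}=\CatPolyOutFunc{\mathrm{pol}}$. If a projective simple $S$ occurs as a subquotient of $M$, say $N_1\subsetneq N_2\subseteq M$ with $N_2/N_1\cong S$, then projectivity of $S$ splits the surjection $N_2\onto S$, so $S$ is actually a subobject of $M$. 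Applying this with $M=\omega\beta\Specht{\lambda}$ and $S\in\{\Q,\ab\}$ would place $S$ inside the socle of $\omega\beta\Specht{\lambda}$, which is the simple functor $\alpha\Specht{\lambda}$; but $\lambda\neq\emptyset$ and $\lambda\neq(1)$ because $|\lambda|\ge2$, a contradiction. Hence $(\omega\beta\Specht{\lambda})^{\emptyset}=(\omega\beta\Specht{\lambda})^{(1)}=0$ and the columns $p=0,1$ of the $E_1$-page vanish. I would stress in the write-up that a naive socle argument does not suffice here: a priori $\Q$ or $\ab$ could occur as composition factors of $\omega\beta\Specht{\lambda}$ away from the socle, and it is precisely their projectivity that rules this out.
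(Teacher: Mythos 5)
Your proof is correct and follows essentially the same route as the paper: items (1) and (3) are the same degree/negative-$\Ext$ observations, and item (2) uses exactly the paper's argument that projectivity of $\Q$ and $\ab$ forces any occurrence as a composition factor to land in the socle $\alpha\Specht{\lambda}$, which is impossible for $|\lambda|\geq 2$. The only divergence is in item (4), where the paper appeals directly to essentiality of the extension $\alpha\Specht{\lambda}\subseteq\omega\beta\Specht{\lambda}$, while you instead compute $\operatorname{cr}_{|\lambda|}(\omega\beta\Specht{\lambda})\cong\Specht{\lambda}$ by sandwiching between $\alpha\Specht{\lambda}$ and $\beta\Specht{\lambda}$ and using exactness of $\operatorname{cr}_{|\lambda|}$ together with the unit and counit isomorphisms — a clean and slightly more self-contained justification of the same multiplicity statement.
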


\begin{proof}
    In order to prove the first statement it suffices to notice that a polynomial functor of degree $d$ has no composition factor of degree larger than $d$, hence $(\omega\beta\Specht{\lambda})^{\rho} = 0$ for all $|\rho| = p > |\lambda|$.

    The second statement follows from the vanishing of $(\omega\beta\Specht{\lambda})^{\rho}$ for $|\rho|\leq 1$ when $|\lambda|\geq 2$. This vanishing is a consequence of the fact that both the constant functor $\Q = \alpha\Specht{(0)}$ and the abelianization functor $\ab=\alpha\Specht{(1)}$ are projective in $\Foutpol$ (and in fact even in $\CatPolyOutFunc{}$, see \cite[Proposition 9.32]{PV18}). If $\Q$ were to appear as a composition factor of $\omega\beta\Specht{\lambda}$, then by projectivity it would be a direct summand, contradicting the fact that $\omega\beta\Specht{\lambda}$ is the injective envelope of $\alpha\Specht{\lambda}$. We then do the same argument for the functor $\ab$.

    The third statement is simply the vanishing of $\Ext^*$ for $*<0$.

    Finally, the last statement follows from the fact that $\omega\beta\Specht{\lambda}$ is the injective envelope of $\alpha\Specht{\lambda}$. In particular it is an essential extension and the only composition factor of $\omega\beta\Specht{\lambda}$ of degree $|\lambda|$ is $\alpha\Specht{\lambda}$, hence for $|\rho| = |\lambda|$ we have that $(\omega\beta\Specht{\lambda})^{\rho}$ has rank $1$ if $\rho=\lambda$ and otherwise vanishes.
\end{proof}

Combining these observations, we obtain:

\begin{prop}\label{prop:vanishing-Ext-Out-high-degree}
    The groups $\Ext^k_{\Foutpol}(\TensorPower{\ab}{n},\TensorPower{\ab}{m})$ vanish for all $k>m-n$.
\end{prop}

\begin{proof}
    Using the isomorphism \eqref{Schur-Weyl-Ext} from Remark \ref{rmk:Schur-Weyl-duality}, it is enough to prove that for all partitions $\nu\vdash n, \lambda\vdash m$ and all $k > m-n$, we have $\Ext^k_{\Foutpol}(\alpha\Specht{\nu},\alpha\Specht{\lambda}) = 0$.
    
    We proceed by induction on $m$. If $m=0$ or $m=1$, the only partition is $\lambda = (m)$ and it follows from \cite[Corollary 12.10]{PV18} that $\alpha\Specht{(m)}$ is injective. This proves vanishing in degree $k\neq 0$, and in degree $k=0$ we must have $m<n$ by assumption, so the result follows from Schur's lemma.

    Otherwise for $m\geq 2$ we have by Lemma \ref{lem:special-values-E1}(4.) that $\Ext^k_{\Foutpol}(\alpha\Specht{\nu},\alpha\Specht{\lambda}) = E_1^{m,k-m}$. Moreover, by Lemma \ref{lem:spectral-sequence-vanishes} $E_{\infty}^{m,k-m} = 0$ since $\Ext^k_{\Foutpol}(\alpha\Specht{\nu}, \omega\beta\Specht{\lambda})= 0$. In order to finish the argument, it is enough to show that there can be no non-trivial differential at $E_r^{m,k-m}$ for any $r\geq 1$.
    
    Clearly there can be no non-trivial outgoing differential, as $E_1^{p,q} = 0$ for all $p>m$ by Lemma \ref{lem:special-values-E1}(1.). There can also be no non-trivial ingoing differential, as any $d_r$ differential into $E_r^{m,k-m}$ would have to start from $E_r^{m-r, k+r-m-1}$, and by the induction hypothesis the corresponding term on the $E_1$-page is already trivial: indeed we have $k-1 \geq k-r > m-r-n$, and therefore $\Ext^{k-1}_{\Foutpol}(\alpha\Specht{\nu},\alpha\Specht{\rho}) = 0$ for each $\rho\vdash m-r$. From the description of the spectral sequence \eqref{eq:spec-seq-outer}, this proves that indeed $E_1^{m-r, k+r-m-1} = 0$, and hence concludes the proof.
\end{proof}

Prior to this work, the only known values for $\Ext^*_{\Foutpol}$ where the following ones:

\begin{prop}\label{prop:values-Ext0-1}
    For partitions $\nu, \lambda$,
    \begin{enumerate}
        \item $\Ext^0_{\Foutpol}(\alpha\Specht{\nu},\alpha\Specht{\lambda}) = \left\{\begin{array}{cc}
            \Q & \text{ if $\nu = \lambda$} \\
            0 & \text{ if $\nu\neq \lambda$}
        \end{array}\right.$
        \item If $|\nu|+1\neq |\lambda|$ then $\Ext^1_{\Foutpol}(\alpha\Specht{\nu}, \alpha\Specht{\lambda}) = 0$, and when $|\nu|+1 = |\lambda|$ there is an equality
        \begin{equation*}
            \Ext^1_{\Foutpol}(\alpha\Specht{\nu}, \alpha\Specht{\lambda}) = (\omega\beta\Specht{\lambda})^{\nu}.
        \end{equation*}
    \end{enumerate}
\end{prop}

The first statement is standard and the second one was already proved in \cite[Corollary 12.13]{PV18}.
We provide a proof anyway for the reader's convenience, and because the argument for $\Ext^1$ illustrates how we can use the spectral sequence.

\begin{proof}
    The first statement is an immediate consequence of Schur's lemma. For the second statement, the fact that $\Foutpol$ is a full subcategory of $\CatFuncGr$ means that we have an injection
    \begin{equation*}
        0\to \Ext^1_{\Foutpol}(\alpha\Specht{\nu}, \alpha\Specht{\lambda})\to \Ext^1_{\CatFuncGr}(\alpha\Specht{\nu}, \alpha\Specht{\lambda}),
    \end{equation*}
    using that $\Ext^1$ classifies extensions of $\alpha\Specht{\nu}$ by $\alpha\Specht{\lambda}$ (see eg \cite[Theorem 3.4.3]{Weibel94}). Combining this injection with the vanishing from \cite[Theorem 1]{Ves18} provides the vanishing for $|\nu|+1\neq|\lambda|$. 
    
    In the case $|\nu|+1 = |\lambda|$, the only non-trivial terms in the associated spectral sequence are $E_1^{|\nu|,-|\nu|} = (\omega\beta\Specht{\lambda})^{\nu}$ (using the result just proven for $\Ext^0$), and $E_1^{|\lambda|,-|\nu|} = \Ext^1_{\Foutpol}(\alpha\Specht{\nu},\alpha\Specht{\lambda})$. Since the spectral sequence converges to $0$ by Lemma \ref{lem:spectral-sequence-vanishes}, the $d_1$-differential between these two terms has to be an isomorphism.
\end{proof}

Combining the above results, we can complement the information obtained from Lemma \ref{lem:special-values-E1} as follows:

\begin{cor}\label{cor:more-special-values-E1}
    In the spectral sequence \eqref{eq:spec-seq-outer} associated to $F = \alpha\Specht{\nu}, G = \omega\beta\Specht{\lambda}$, the following terms vanish:
    \begin{enumerate}
        \item $E_1^{p,q} = 0$ for all $q > -|\nu|$
        \item $E_1^{p,-p} = 0$ for all $p\neq |\nu|$
        \item $E_1^{p+1,-p} = 0$ for all $p\neq |\nu|$.
    \end{enumerate}
\end{cor}

\begin{proof}
    The first statement is a direct corollary of Proposition \ref{prop:vanishing-Ext-Out-high-degree}, while the other two statements are a direct corollary of Proposition \ref{prop:values-Ext0-1}.
\end{proof}

As a (partial) summary of these vanishing statements, we state the following

\begin{prop}\label{prop:triangle-E1}
    Consider the spectral sequence \eqref{eq:spec-seq-outer} associated to $F = \alpha\Specht{\nu}, G = \omega\beta\Specht{\lambda}$. If $|\nu| > |\lambda|$, then the $E_1$-page is identically zero (and therefore so is the whole spectral sequence). Otherwise, the nonzero terms are concentrated in the triangle with corners $(p,q) = (|\nu|, -|\nu|), (|\lambda|,-|\nu|), (|\lambda|, -|\lambda|)$. Moreover, for all $\nu < \lambda$ the differential $d_1\colon E_1^{|\nu|,-|\nu|}\to E_1^{|\nu|+1,-|\nu|}$ is injective.
\end{prop}

\begin{proof}
    The triangular shape is obtained by combining the first statement of Corollary \ref{cor:more-special-values-E1} with Lemma \ref{lem:special-values-E1} (1. and 3.). The injectivity follows from the fact that $E_{\infty}^{|\nu|,-|\nu|} = 0$ by Lemma \ref{lem:spectral-sequence-vanishes}, and the indicated differential is the only one that may be non-trivial.
\end{proof}

These statements are summarized in Figure \ref{fig:E1-page}, depicting the $E_1$-page of the spectral sequence associated to $F = \alpha\Specht{\nu}, G = \omega\beta\Specht{\lambda}$, here in the case $\nu\vdash 4$ and $\lambda\vdash 9$, with $p$ as the horizontal coordinate and $q$ as the vertical coordinate. The grey triangle is the region where the terms may be non-zero. This region contains exactly $6 = |\lambda| - |\nu| + 1$ columns. In that region, the empty circles represent terms that are actually $0$ due to the vanishing results for $\Ext^0$ and $\Ext^1$. The crosses on the top row are the ``Koszul region''; for the extensions inside $\CatPolyFunc{\mathrm{pol}}$ discussed in the previous section they are the only nonzero terms. The plain circles are the remaining terms, which have in general no reason to vanish according to our current knowledge (and we prove below that some of them are indeed non-trivial). The rightmost terms are $\Ext^{p+q}_{\Foutpol}(\alpha\Specht{\nu},\alpha\Specht{\lambda})$, as indicated by their label.

\begin{figure}[h]
    \centering
    \includegraphics[scale = 0.75]{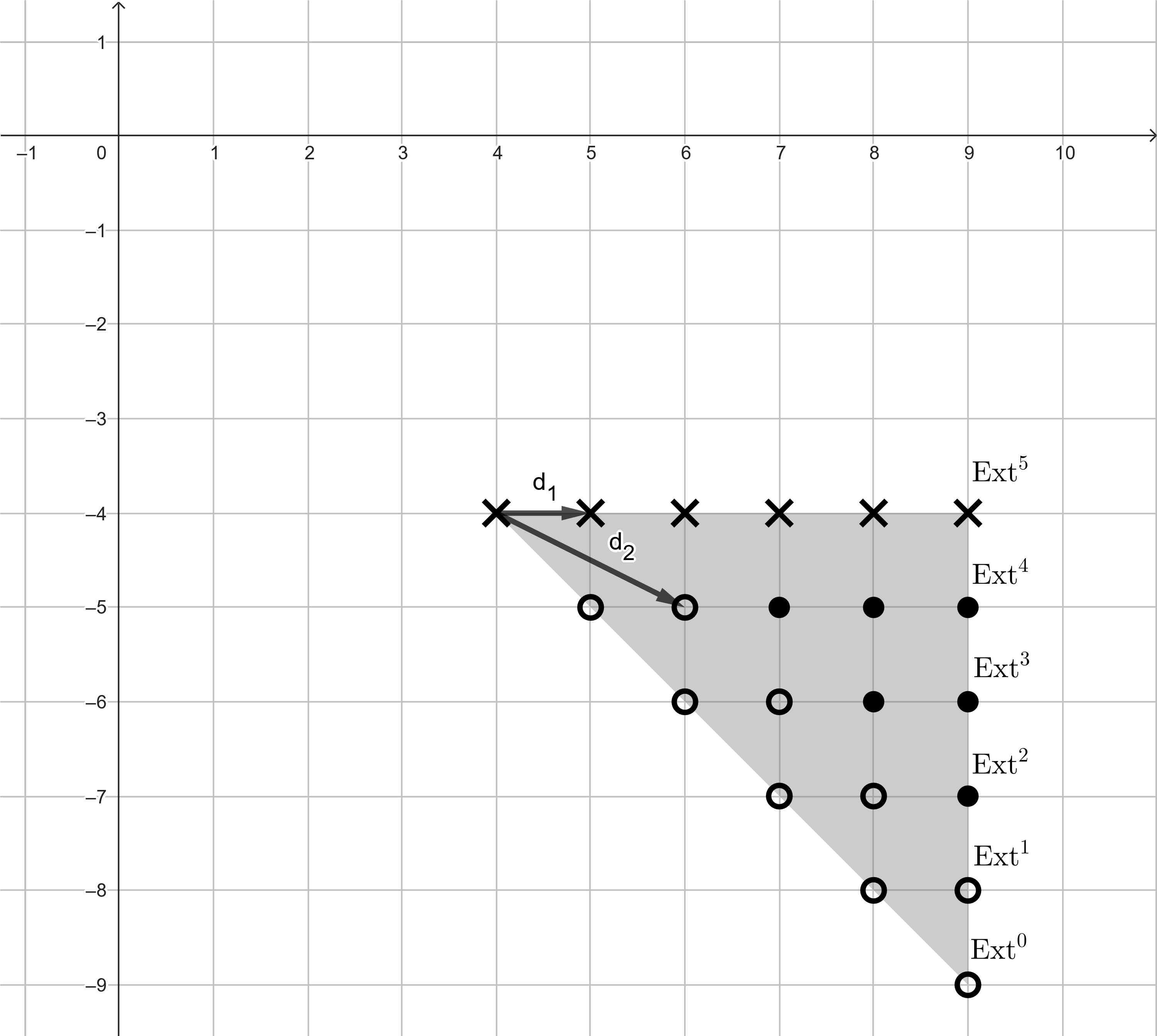}
    \caption{Depiction of the $E_1$-page in the case $\nu\vdash 4, \lambda\vdash 9$}
    \label{fig:E1-page}
\end{figure}

\subsection{Main results}
We list below the main results of this paper. Most of them are variations or special cases of Theorem \ref{thm:determine-composition-factors}. This theorem is expressed in terms of the \emph{Euler characteristic}\footnote{or \emph{Euler--Poincaré characteristic}}, $\chi(\Ext^*_{\Foutpol}(\alpha\Specht{\nu},\alpha\Specht{\lambda})) := \sum{(-1)^i \dim(\Ext^i_{\Foutpol}(\alpha\Specht{\nu}, \alpha\Specht{\lambda}))}$.

In preparation for the next theorem, we state and prove the following equations, relating the values of $\chi(\Ext^*_{\Foutpol}(\alpha\Specht{\nu},\alpha\Specht{\rho}))$ and the ranks $\dim(\omega\beta\Specht{\lambda})^{\rho}$.

\begin{lem}\label{lem:triangular-equalities-spectral-sequence}
    Given any two partitions $\nu, \lambda$, we have the following relations, with $\delta_{\nu,\lambda}$ denoting a \emph{Kronecker delta}, which takes the value $1$ if $\nu = \lambda$, and $0$ otherwise
    \begin{enumerate}
        \item If $|\nu| > |\lambda|$, then $\chi(\Ext^*_{\Foutpol}(\alpha\Specht{\nu}, \alpha\Specht{\lambda})) = 0 = \dim(\omega\beta\Specht{\lambda})^{\nu}$.
        \item If $|\nu| = |\lambda|$, then $\chi(\Ext^*_{\Foutpol}(\alpha\Specht{\nu}, \alpha\Specht{\lambda})) = \delta_{\nu,\lambda} = \dim(\omega\beta\Specht{\lambda})^{\nu}$.
        \item If $|\nu| < |\lambda|$, the following equation is satisfied
        \begin{equation}\label{eq:triangular-system}
            0 = \sum_{p=|\nu|}^{|\lambda|}{\sum_{\rho\vdash p} \dim(\omega\beta\Specht{\lambda})^{\rho}\cdot \chi(\Ext^*_{\Foutpol}(\alpha\Specht{\nu},\alpha\Specht{\rho}))}.
        \end{equation}
        Moreover, for $p = |\nu|$ the only non-trivial term in the inner sum is $\dim(\omega\beta\Specht{\lambda})^{\nu}$, and for $p = |\lambda|$ the only non-trivial term in the inner sum is $\chi(\Ext^*_{\Foutpol}(\alpha\Specht{\nu},\alpha\Specht{\lambda}))$.
    \end{enumerate}
\end{lem}

\begin{proof}
    The first statement is a corollary of Proposition \ref{prop:vanishing-Ext-Out-high-degree} for the first equality, and of the fact that $\omega\beta\Specht{\lambda}$ belongs to $\CatPolyOutFunc{|\lambda|}$ for the second equality.

    For the second statement, the first equality also follows from Proposition \ref{prop:vanishing-Ext-Out-high-degree}, combined with Schur's lemma for computing $\Ext^0_{\Foutpol}(\alpha\Specht{\nu},\alpha\Specht{\lambda})$. The second equality follows from the recollement diagram of Proposition \ref{prop:recollement-diagram-poly-out}, and more specifically from the isomorphism $\operatorname{cr}^{\Z}_{|\lambda|}\omega\beta\Specht{\lambda} \cong \Specht{\lambda}$.

    Finally for the third statement we consider the spectral sequence \eqref{eq:spec-seq-outer} associated to $F = \alpha\Specht{\nu}$ and $G = \omega\beta\Specht{\lambda}$. Since $\nu\neq \lambda$. By Lemma \ref{lem:spectral-sequence-vanishes}, this spectral sequence converges to $0$, and therefore on each page the Euler characteristic must be $0$. Thus we obtain
    \begin{align*}
        0 = &\sum_{p,q}{(-1)^{p+q}\dim(E_1^{p,q})} \\
         = &\sum_p \sum_{\rho\vdash p} {\Big(\sum_q{(-1)^{p+q}\dim(\omega\beta\Specht{\lambda})^{\rho}}\cdot \dim(\Ext^{p+q}_{\Foutpol}(\alpha\Specht{\nu},\alpha\Specht{\rho}))}\Big)  \\
         = &\sum_{p=|\nu|}^{|\lambda|} \sum_{\rho\vdash p} {\dim(\omega\beta\Specht{\lambda})^{\rho}\cdot\chi(\Ext^{*}_{\Foutpol}(\alpha\Specht{\nu},\alpha\Specht{\rho}))}.
    \end{align*}
    The equality between the last two lines uses the first statement of this lemma. The second statement of this lemma implies that the inner sums simplify as indicated when $p = |\nu|$ and $p = |\lambda|$.
\end{proof}

The significance of these equations is highlighted by the next theorem

\begin{thm}\label{thm:determine-composition-factors}
    The collection of all ranks $\{\dim(\omega\beta\Specht{\lambda})^{\nu}\mid \nu,\lambda\}$ and the collection of all Euler characteristics $\{\chi(\Ext^*_{\Foutpol}(\alpha\Specht{\nu}, \alpha\Specht{\lambda}))\mid \nu,\lambda\}$ uniquely determine each other.
    
    In fact, for a given pair of partitions $\nu, \lambda$, we can compute $\chi(\Ext^*_{\Foutpol}(\alpha\Specht{\nu},\alpha\Specht{\lambda}))$ provided we already know $\dim(\omega\beta\Specht{\rho})^{\rho'}$ for all $|\nu|\leq |\rho'|\leq |\rho|\leq |\lambda|$. Conversely, we can compute $\dim(\omega\beta\Specht{\lambda})^{\nu}$ provided we already know $\chi(\Ext^*_{\Foutpol}(\alpha\Specht{\rho'}, \alpha\Specht{\rho}))$ for all $|\nu|\leq |\rho'|\leq |\rho|\leq |\lambda|$.
\end{thm}

\begin{proof}
    Before starting with the details of the proof, let us explain the general idea: the equations from the previous lemma \ref{lem:triangular-equalities-spectral-sequence} can be assembled into an (infinite) triangular system of equations, either with variables $\chi(\Ext^*_{\Foutpol}(\alpha\Specht{\nu},\alpha\Specht{\lambda}))$ and parameters $\dim(\omega\beta\Specht{\rho})^{\rho'}$, or conversely. The matrix corresponding to this system is invertible, which proves our claim.

    More explicitly, we will explain how to compute all the $\chi(\Ext^*_{\Foutpol}(\alpha\Specht{\nu},\alpha\Specht{\lambda}))$, provided we already know the ranks $\dim(\omega\beta\Specht{\rho})^{\rho'}$. The argument for the other direction is completely analogous. We fix a partition $\nu$, and we recursively compute $\chi(\Ext^*_{\Foutpol}(\alpha\Specht{\nu},\alpha\Specht{\lambda}))$ by a recursion on $|\lambda|$.

    By Lemma \ref{lem:triangular-equalities-spectral-sequence} we already know $\chi(\Ext^*_{\Foutpol}(\alpha\Specht{\nu},\alpha\Specht{\lambda}))$ when $|\lambda| \leq |\nu|$, so that gives us the base case of the recursion. Now assume that we have already computed the Euler characteristics $\chi(\Ext^*_{\Foutpol}(\alpha\Specht{\nu},\alpha\Specht{\rho}))$ for all partitions $\rho$ with $|\rho| < |\lambda|$. Since in Equation \eqref{eq:triangular-system} the inner sum when $p = |\lambda|$ simplifies to $\chi(\Ext^*_{\Foutpol}(\alpha\Specht{\nu},\alpha\Specht{\lambda}))$, we can rearrange the equation in order to express $\chi(\Ext^*_{\Foutpol}(\alpha\Specht{\nu},\alpha\Specht{\lambda}))$ in terms of the ranks $\dim(\omega\beta\Specht{\lambda})^{\rho}$, which we assume are already given, and of the Euler characteristics $\chi(\Ext^*_{\Foutpol}(\alpha\Specht{\nu},\alpha\Specht{\rho}))$ with $|\rho| < |\lambda|$, which we have already computed at the previous steps of the recursion. Therefore this finishes the recursive step.

    We saw in the last paragraph that $\chi(\Ext^*_{\Foutpol}(\alpha\Specht{\nu},\alpha\Specht{\lambda}))$ is computed from $\dim(\omega\beta\Specht{\lambda})^{\rho}$ and $\chi(\Ext^*_{\Foutpol}(\alpha\Specht{\nu},\alpha\Specht{\rho}))$ for $|\nu|\leq |\rho| < |\lambda|$. By induction, these latter Euler characteristics depend only on $\dim(\omega\beta\Specht{\rho})^{\rho'}$ for $|\nu| \leq |\rho'|\leq |\rho| < |\lambda|$, and therefore we obtain indeed that $\chi(\Ext^*_{\Foutpol}(\alpha\Specht{\nu},\alpha\Specht{\lambda}))$ only depends on $\dim(\omega\beta\Specht{\rho})^{\rho'}$ for $|\nu| \leq |\rho'|\leq |\rho| \leq |\lambda|$.
\end{proof}

The second part of Theorem \ref{thm:determine-composition-factors} above is very important from a practical perspective, since it means that we can compute the Euler characteristics $\chi(\Ext^*_{\Foutpol}(\alpha\Specht{\nu}, \alpha\Specht{\lambda}))$ even if we don't know all the ranks $\dim(\omega\beta\Specht{\lambda})^{\nu}$; we only need to know enough of them. More concretely, Gadish and the author computed in \cite{GH22} these ranks in the range $|\lambda|\leq 10$, and by the previous theorem this is enough to compute the Euler characteristics $\chi(\Ext^*_{\Foutpol}(\alpha\Specht{\nu}, \alpha\Specht{\lambda}))$ in the same range ($|\lambda|\leq 10)$. A direct consequence of these computations is our next theorem. Before stating it, let us briefly explain how to translate between the terminology of \cite{GH22} and the terminology used in the present paper:

\begin{rmk}\label{rmk:translation-from-GH}
    Combining the discussion of \cite[\S 4.4, \S 5.1]{GH22} and \cite[Theorem 16.11]{PV18}, we obtain that $(\omega\beta\Specht{\lambda})^{\nu} = \Phi^{|\lambda|-|\nu|}[\TransposePartition{\lambda}, \TransposePartition{\nu}]$, with $\TransposePartition{\lambda}$ denoting the partition conjugate to $\lambda$ and similarly $\TransposePartition{\nu}$ being the partition conjugate to $\nu$.
    
    The notation $\Phi$ is not used in the present paper.
\end{rmk}

\begin{thm}
    There exist triples $(k,n,m)$ with $k<m-n$ such that $\Ext^k_{\Foutpol}(\TensorPower{\ab}{n}, \TensorPower{\ab}{m})\neq 0$. For example $\Ext^4_{\Foutpol}(\TensorPower{\ab}{4}, \TensorPower{\ab}{9})\neq 0$, and also $\Ext^k_{\Foutpol}(\TensorPower{\ab}{3},\TensorPower{\ab}{10})$ is non-trivial in some degree $1 < k < 7$.
\end{thm}

\begin{proof}
    This is essentially a direct corollary of Theorem \ref{thm:determine-composition-factors} and Remark \ref{rmk:translation-from-GH}. Using the computations performed in \cite{GH22}, we can implement the recursive algorithm described in the proof of Theorem \ref{thm:determine-composition-factors} and compute $\chi(\Ext^*_{\Foutpol}(\alpha\Specht{\nu},\alpha\Specht{\lambda}))$ for all $|\lambda| \leq 10$ (recall from Corollary \ref{cor:more-special-values-E1} that the only non-trivial cases are when $|\nu| \leq |\lambda|)$. If we have $(-1)^{|\lambda| - |\nu|}\chi(\Ext^*_{\Foutpol}(\alpha\Specht{\nu},\alpha\Specht{\lambda})) < 0$, then certainly $\Ext^*_{\Foutpol}(\alpha\Specht{\nu},\alpha\Specht{\lambda})$ cannot be concentrated in degree $* = |\lambda| - |\nu|$. In the range considered here this happens in exactly two cases: $\nu = (4)$ and $\lambda = (1^9)$, as well as $\nu = (1^3)$ and $\lambda = (3^2, 2^2)$. The algorithms and some computations are accessible here\footnote{https://github.com/louishainaut/Ext-Outer-Functors}.

    As an illustration, for the case $(\nu, \lambda) = ((4), (1^9))$, \cite[Theorem 12.23]{PV18} showed that the composition factors of $\omega\beta\Specht{\lambda}$ are $\alpha\Specht{(1^9)}, \alpha\Specht{(2, 1^6)}, \alpha\Specht{(3,1^4)}, \alpha\Specht{(4, 1^2)}, \alpha\Specht{(5)}$, and the previous steps of the recursion provide 
    \begin{align*}
        \chi(\Ext^*_{\Foutpol}(\alpha\Specht{(4)},\alpha\Specht{(5)})) = 0\qquad
        \chi(\Ext^*_{\Foutpol}(\alpha\Specht{(4)},\alpha\Specht{(4, 1^2)})) = 0\\
        \chi(\Ext^*_{\Foutpol}(\alpha\Specht{(4)},\alpha\Specht{(3, 1^4)})) = -2\qquad
        \chi(\Ext^*_{\Foutpol}(\alpha\Specht{(4)},\alpha\Specht{(2, 1^6)})) = 1.
    \end{align*}
    We obtain therefore $\chi(\Ext^*_{\Foutpol}(\alpha\Specht{(4)},\alpha\Specht{(1^9)})) = 1$.

    In certain favorable cases the spectral sequences allow us to obtain information beyond the Euler characteristic. For example in the case $\nu = (4)$ and $\lambda = (1^9)$, the Euler characteristic computation implies that at least one of $\Ext^4_{\Foutpol}(\alpha\Specht{(4)},\alpha\Specht{(1^9)})$ and $\Ext^2_{\Foutpol}(\alpha\Specht{(4)},\alpha\Specht{(1^9)})$ must be non-trivial. If $\Ext^2_{\Foutpol}(\alpha\Specht{(4)},\alpha\Specht{(1^9)}) = E_1^{9,-7}$ were non-trivial, then since that term does not survive to the $E_{\infty}$-page there must be some non-trivial differential with that term as either the source or the target. However it follows from Corollary \ref{cor:more-special-values-E1} (see also Figure \ref{fig:E1-page}) that the only possible such differential would be $d_4\colon E_4^{5,-4}\to E_4^{9,-7}$, and this differential must be trivial since we have seen above that $E_1^{5,-4} = \Ext^1_{\Foutpol}(\alpha\Specht{(4)},\alpha\Specht{(5)})$ is already trivial. Therefore the only remaining possibility is that $\Ext^4_{\Foutpol}(\alpha\Specht{(4)},\alpha\Specht{(1^9)})$ is non-trivial.
\end{proof}

\begin{rmk}
    It is possible that there exist more pairs $(\nu, \lambda)$ in the range $|\lambda|\leq 10$ for which $\Ext^*_{\Foutpol}(\alpha\Specht{\nu},\alpha\Specht{\lambda})$ is not concentrated in degree $* = |\lambda| - |\nu|$, but these two are the only ones that can be detected from the Euler characteristic.
\end{rmk}

Recall from the introduction that in the category $\CatFuncGr$ it is true for all partitions $\nu,\lambda$ that $\Ext^*_{\CatFuncGr}(\alpha\Specht{\nu}, \alpha\Specht{\lambda})$ is concentrated in degree $* = |\lambda| - |\nu|$ (see \cite{Ves18}), and in \cite{Pow21} this property was interpreted as a certain Koszul-type property of the category $\CatFuncGr$. The previous theorem says that $\Foutpol$ does not have this Koszul property, but we define by analogy

\begin{terminology}
    For a given a pair of partitions $(\nu, \lambda)$, we will say that $\Ext^*_{\Foutpol}(\alpha\Specht{\nu},\alpha\Specht{\lambda})$ has the \emph{Koszul property} if it is trivial in all degrees $* \neq |\lambda| - |\mu|$.
\end{terminology}

\begin{rmk}
    At the moment, it is generally unknown whether $\Ext^*_{\Foutpol}(\alpha\Specht{\nu},\alpha\Specht{\lambda})$ has the Koszul property. However in some cases we can prove that the Koszul property is satisfied; in those cases we can compute $\Ext^{|\lambda| - |\nu|}_{\Foutpol}(\alpha\Specht{\nu},\alpha\Specht{\lambda})$ from the identity $\chi(\Ext^*_{\Foutpol}(\alpha\Specht{\nu},\alpha\Specht{\lambda})) = (-1)^{|\lambda| - |\nu|}\dim(\Ext^{|\lambda| - |\nu|}_{\Foutpol}(\alpha\Specht{\nu},\alpha\Specht{\lambda}))$.
\end{rmk}

As an illustration of this last remark, we have the following corollary to Theorem \ref{thm:determine-composition-factors}

\begin{cor}\label{cor:computing-Ext2}
    In all degrees $k\neq 2$, we have $\Ext^k_{\Foutpol}(\TensorPower{\ab}{n}, \TensorPower{\ab}{n+2}) = 0$. Moreover, $\Ext^2_{\Foutpol}(\TensorPower{\ab}{n}, \TensorPower{\ab}{n+2})$ can be explicitly computed by the following short exact sequence of $(\Q\SymGroup[n], \Q\SymGroup[n+2])$-bimodules:
    \begin{equation}\label{eq:formula-Ext2}
        (\omega\beta\Q\SymGroup[n+2])^{[n]}\into \Ext^1_{\Foutpol}(\TensorPower{\ab}{n},\TensorPower{\ab}{n+1})\otimes_{\SymGroup[n+1]}(\omega\beta\Q\SymGroup[n+2])^{[n+1]}\onto \Ext^2_{\Foutpol}(\TensorPower{\ab}{n}, \TensorPower{\ab}{n+2}).
    \end{equation}
\end{cor}

\begin{proof}
    The vanishing in degrees $k\neq 2$ is obtained by combining Propositions \ref{prop:vanishing-Ext-Out-high-degree} and \ref{prop:values-Ext0-1}.

    Since $\Ext^*_{\Foutpol}(\TensorPower{\ab}{n}, \TensorPower{\ab}{n+2})$ has the Koszul property we can use Theorem \ref{thm:determine-composition-factors} to compute $\Ext^2_{\Foutpol}(\TensorPower{\ab}{n}, \TensorPower{\ab}{n+2})$, and we obtain precisely the formula indicated.
\end{proof}

\begin{expl}
    In the short exact sequence \eqref{eq:formula-Ext2}, the middle term can be computed from Proposition \ref{prop:values-Ext0-1}, and a closed form formula can be obtained from \cite[Corollary 12.13]{PV18}. The leftmost term has been computed in \cite{GH22}\footnote{The computations can be accessed on the author's personal website \url{https://louishainaut.github.io/GH-ConfSpace/}} up to $n=8$, and therefore we can compute $\Ext^2_{\Foutpol}(\TensorPower{\ab}{n},\TensorPower{\ab}{n+2})$ in the same range. Assuming \cite[Conjecture 6.7]{GH22}, we can compute $(\omega\beta\Q\SymGroup[n])^{[n+2]}$ for all $n$ , and from there we can compute $\Ext^2_{\Foutpol}(\TensorPower{\ab}{n},\TensorPower{\ab}{n+2})$ for $n$ as large as we want. This conjecture has now been proved in recent work of Powell \cite{PowCoadjoint}.
\end{expl}

The ranks $\dim(\Ext^2_{\Foutpol}(\TensorPower{\ab}{n},\TensorPower{\ab}{n+2}))$ are given in Table \ref{tab:rank_Ext2}. We will discuss further the structure of $\Ext^2_{\Foutpol}(\TensorPower{\ab}{n},\TensorPower{\ab}{n+2})$ as a $(\Q\SymGroup[n],\Q\SymGroup[n+2])$-bimodule at the end of this paper.

\begin{table}[h!]
    \centering
    \begin{tabular}{c|c|c|c|c|c|c|c|c|c}
        $n$ & $0$ & $1$ & $2$ & $3$ & $4$ & $5$ & $6$ & $7$ & $8$ \\
        \hline
        $\dim$ & $0$ & $0$ & $0$ & $28$ & $478$ & $6718$ & $90718$ & $1239838$ & $17539198$
    \end{tabular}
    \caption{Dimension of $\operatorname{Ext}^{2}_{\Foutpol}(\TensorPower{\ab}{n}, \TensorPower{\ab}{n+2})$}
    \label{tab:rank_Ext2}
\end{table}

\begin{prop}
    The groups $\Ext^k_{\Foutpol}(\TensorPower{\ab}{2}, \TensorPower{\ab}{n})$ vanish unless $(n,k) = (2,0)$ or $(n,k) = (3,1)$. In those two cases we have the following isomorphisms of $(\Q\SymGroup[\bullet], \Q\SymGroup[\bullet])$-bimodules $\Ext^0_{\Foutpol}(\TensorPower{\ab}{2}, \TensorPower{\ab}{2}) \cong \Q\SymGroup[2]$, respectively $\Ext^1_{\Foutpol}(\TensorPower{\ab}{2}, \TensorPower{\ab}{3}) \cong \Specht{(2)} \boxtimes \Specht{(1,1,1)}$.
\end{prop}

\begin{figure}[h]
    \centering
    \includegraphics[width=0.5\linewidth]{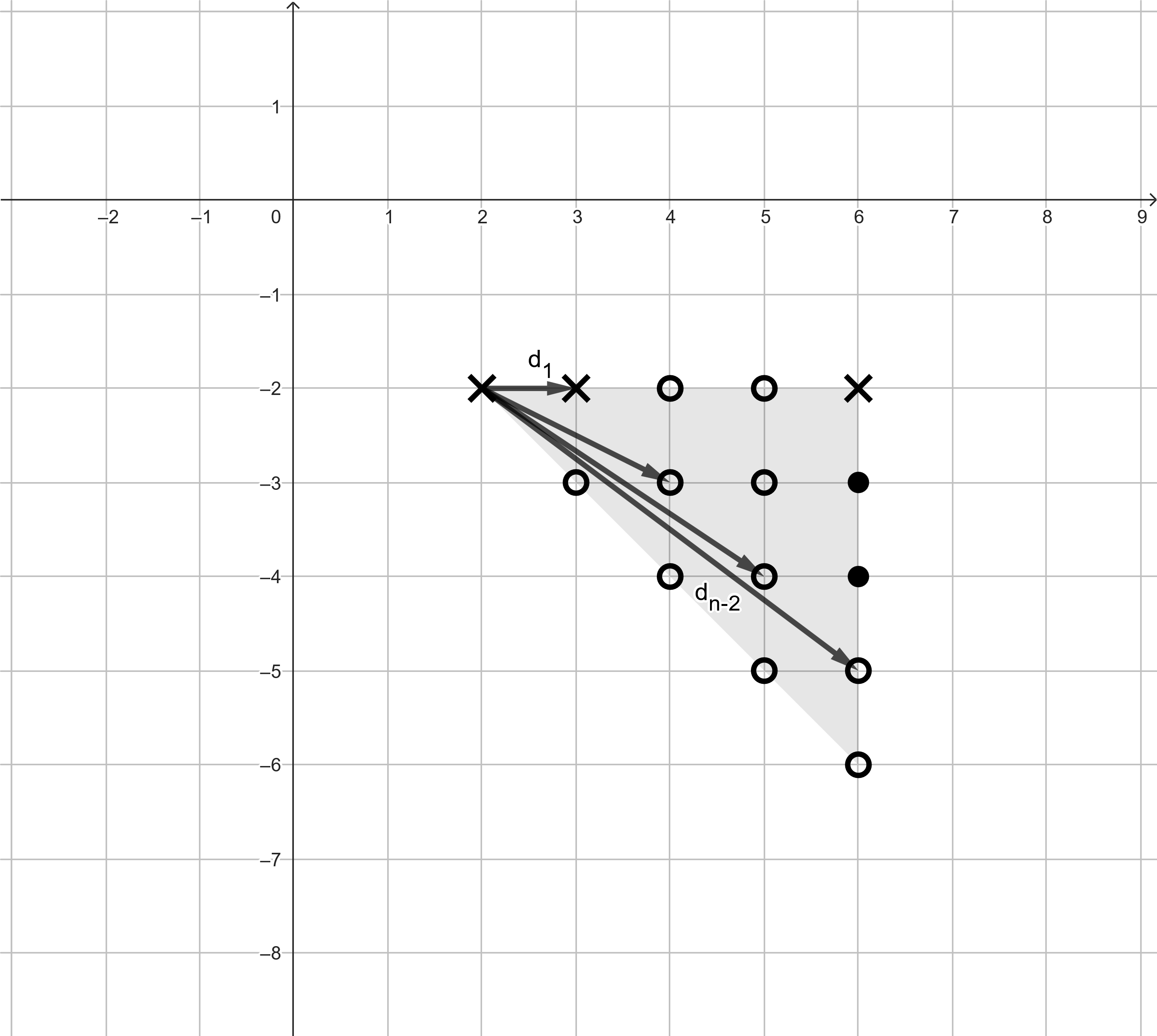}
    \caption{The spectral sequence for computing $\Ext^*_{\Foutpol}(\TensorPower{\ab}{2},\TensorPower{\ab}{n})$, here with $n=6$}
    \label{fig:Spectral-Sequence-a2}
\end{figure}

\begin{proof}
    We already know from Proposition \ref{prop:vanishing-Ext-Out-high-degree} that these groups vanish if $k> n-2$, so in particular in order to have a non-vanishing term we need $n\geq 2$. For $n=2$ we have that $\Ext^k_{\Foutpol}(\TensorPower{\ab}{2}, \TensorPower{\ab}{2})$ vanish outside the degree $k=0$, where it takes the value
    \begin{equation*}
        \Ext^0_{\Foutpol}(\TensorPower{\ab}{2}, \TensorPower{\ab}{2}) = \Specht{(1,1)}\boxtimes\Specht{(1,1)} + \Specht{(2)}\boxtimes \Specht{(2)} \cong \Q\SymGroup[2].
    \end{equation*}

    For $n=3$ we have that $\Ext^k_{\Foutpol}(\TensorPower{\ab}{2}, \TensorPower{\ab}{3})$ vanishes unless $k=1$, where it takes the value
    \begin{equation*}
        \Ext^1_{\Foutpol}(\TensorPower{\ab}{2}, \TensorPower{\ab}{3}) = \Specht{(2)}\boxtimes\Specht{(1,1,1)}.
    \end{equation*}

    For $n\geq 4$ we prove by induction that $\Ext^k_{\Foutpol}(\TensorPower{\ab}{2}, \TensorPower{\ab}{n})$ vanishes identically. In the spectral sequence \eqref{eq:spec-seq-outer} associated to $F = \TensorPower{\ab}{2}$ and $G = \omega\beta\Q\SymGroup[n]$, the induction hypothesis implies that the only terms that can be non-trivial are $E_1^{n,q} = \Ext^{n+q}_{\Foutpol}(\TensorPower{\ab}{2},\TensorPower{\ab}{n})$, as well as $E_1^{2,-2}$ and $E_1^{3,-2}$. It suffices to prove that $d_1\colon E_1^{2,-2}\to E_1^{3,-2}$ is an isomorphism: indeed, in that case on the $E_2$-page we have that $E_2^{n,q} = E_1^{n,q} = \Ext^{n+q}_{\Foutpol}(\TensorPower{\ab}{2},\TensorPower{\ab}{n})$, and all the other terms are already known to be trivial. Consequently, there can be no other non-trivial differential in the spectral sequence, and hence $E_{\infty}^{n,q} = \Ext^{n+q}_{\Foutpol}(\TensorPower{\ab}{2},\TensorPower{\ab}{n})$. By Lemma \ref{lem:spectral-sequence-vanishes} the $E_{\infty}$ page is trivial, and this finishes the proof. We have already proved in Proposition \ref{prop:triangle-E1} that the differential $d_1\colon E_1^{2,-2}\to E_1^{3,-2}$ is injective. See Figure \ref{fig:Spectral-Sequence-a2} for an illustration. 
    
    Thus, all that remains to do is to show that $d_1\colon E_1^{2,-2}\to E_1^{3,-2}$ is also surjective. We use the isomorphism \eqref{Schur-Weyl-Ext} from Remark \ref{rmk:Schur-Weyl-duality} and prove that this is the case in each of the spectral sequences associated to $F = \alpha\Specht{(1,1)}$ or $F = \alpha\Specht{(2)}$ and $G =  \omega\beta\Specht{\lambda}$ for any partition $\lambda\vdash n$.
    
    We consider first the easier case $F = \alpha\Specht{(1,1)}$. Then $E_1^{3,-2} = 0$, and so clearly $d_1$ is surjective in that case.
    
    For the case $\alpha\Specht{(2)}$, we have:
    \begin{equation*}
        E_1^{2,-2} = (\omega\beta\Specht{\lambda})^{(2)}\quad\text{and}\quad E_1^{3,-2} = (\omega\beta\Specht{\lambda})^{(1,1,1)}.
    \end{equation*}
    It follows from \cite[Propositions 5.16, 5.17]{GH22} that
    \begin{equation*}
        (\omega\beta\Specht{\lambda})^{(2)} \cong (\omega\beta\Specht{\lambda})^{(1,1,1)} \cong \Lie((n))\otimes_{\SymGroup}\Specht{\lambda},
    \end{equation*}
    with $\Lie((n))$ denoting the arity $n$ part of the cyclic Lie operad. Since we already know that $d_1\colon E_1^{2,-2}\to E_1^{3,-2}$ is injective, and we just verified that its source and target are isomorphic and finite-dimensional, it must even be an isomorphism.
\end{proof}

\begin{rmk}
    The indecomposable polynomial outer functors of degree $3$ are $\alpha\Specht{(3)}$, $\alpha\Specht{(2,1)}$, $\omega\beta\Specht{(1,1,1)}$, and $\alpha\Specht{(1,1,1)}$, with $\omega\beta\Specht{(1,1,1)}$ being a non-trivial extension of $\alpha\Specht{(1,1,1)}$ by $\alpha\Specht{(2)}$. It is possible to interpret this last proposition as the statement that, for any $n\geq 4$, the functor $\alpha\Specht{(1,1,1)}$ never appears as summand of $\LeftAdjq{3}\omega\beta\Specht{n}$.
\end{rmk}

\subsection{Open questions and future directions}

We finish this paper by discussing some conjectural patterns that appear in our computations, as well as some ideas that have not been explored here but might provide valuable insights. In the first part, we compare the $(\Q\SymGroup[n], \Q\SymGroup[n+k])$-bimodule structure of $\chi(\Ext^*_{\Foutpol}(\TensorPower{\ab}{n},\TensorPower{\ab}{n+k}))$ with that of $\Ext^k_{\CatFuncGr}(\TensorPower{\ab}{n},\TensorPower{\ab}{n+k})$, for varying $n$ and $k$. In particular, for $k = 2$ we obtain a conjectural formula expressing $\chi(\Ext^*_{\Foutpol}(\TensorPower{\ab}{n},\TensorPower{\ab}{n+2})) = \Ext^2_{\Foutpol}(\TensorPower{\ab}{n},\TensorPower{\ab}{n+2})$ in terms of $\Ext^l_{\CatFuncGr}(\TensorPower{\ab}{n},\TensorPower{\ab}{n+l})$ for $l\leq 2$. At the end of this first part we also discuss a possible identification of the differentials in our spectral sequence with cohomology operations involving the terms $\Ext^*_{\Foutpol}(\TensorPower{\ab}{n},\TensorPower{\ab}{m})$ (Yoneda product, as well as Massey products of various orders). The second part discusses a structure of (graded) PROP on $\Ext^*_{\Foutpol}(\TensorPower{\ab}{n},\TensorPower{\ab}{m})$ for varying $n$ and $m$, and we conjecture in particular the existence of certain stabilization properties for this PROP.

We start by presenting patterns observed from our computation of $\Ext^2_{\Foutpol}(\TensorPower{\ab}{n},\TensorPower{\ab}{n+2})$ from Corollary \ref{cor:computing-Ext2}:

\begin{rmk}\label{rmk:numerical-coincidence-Ext2}
    Using the short exact sequence \eqref{eq:formula-Ext2}, we notice that for $2\leq n\leq 8$ there is an identity in the Grothendieck group of $(\Q\SymGroup[n],\Q\SymGroup[n+2])$-bimodules:
    \begin{equation}\label{eq:formula-Ext2-Grothendieck}
    \begin{split}
        [\Ext^2_{\Foutpol}(\TensorPower{\ab}{n},\TensorPower{\ab}{n+2})] = &[\Ext^2_{\CatFuncGr}(\TensorPower{\ab}{n},\TensorPower{\ab}{n+2})] - [\operatorname{Ind}_{\SymGroup[n]^{op}\times\SymGroup[n+1]}^{\SymGroup[n]^{op}\times\SymGroup[n+2]}\Ext^1_{\CatFuncGr}(\TensorPower{\ab}{n},\TensorPower{\ab}{n+1})] +\\
        &[\operatorname{Ind}_{\SymGroup[n]^{op}\times\SymGroup[n]}^{\SymGroup[n]^{op}\times\SymGroup[n+2]}\Ext^0_{\CatFuncGr}(\TensorPower{\ab}{n},\TensorPower{\ab}{n})] - [\Specht{(n)}\boxtimes\Specht{(n+2)}] - [\Specht{(1^{n})}\boxtimes\Specht{(1^{n+2})}];
    \end{split}
    \end{equation}
    in this formula the induction from $\SymGroup[n+i]$ to $\SymGroup[n+2]$ is performed using the trivial representation on $\SymGroup[2-i]$. This formula is almost valid for $n=0$ and $n=1$, except that in those two cases the last term is absent.

    The ranks $\dim \Ext^2_{\Foutpol}(\TensorPower{\ab}{n},\TensorPower{\ab}{n+2})$ are given in Table \ref{tab:rank_Ext2} above. In the range $2\leq n\leq 8$ those ranks satisfy
    \begin{equation}\label{eq:formula-rank-Ext2}
        \dim \Ext^2_{\Foutpol}(\TensorPower{\ab}{n},\TensorPower{\ab}{n+2}) = \operatorname{Rk}(n+2,n)-1,
    \end{equation}
    where $\operatorname{Rk}(b,a)$ is a function defined by Powell and Vespa in \cite{PV23}, and they claim:
    \begin{equation*}
        \dim \Ext^1_{\Foutpol}(\TensorPower{\ab}{n}, \TensorPower{\ab}{n+1}) = \operatorname{Rk}(n+1,n).
    \end{equation*}
\end{rmk}

We state explicitly the following conjecture.

\begin{conj}
    The identities \eqref{eq:formula-Ext2-Grothendieck} and \eqref{eq:formula-rank-Ext2} hold for all $2\leq n$.
\end{conj}

It might be possible to prove this conjecture using \cite[Conjecture 6.7]{GH22} (now a theorem of Powell \cite{PowCoadjoint}). However doing so would be of limited interest, since in our setting these identities are mere numerical coincidences, and other techniques are needed to explain them.

It would be interesting to generalize Equation \eqref{eq:formula-Ext2-Grothendieck} to the case of $\Ext^k_{\Foutpol}(\TensorPower{\ab}{n},\TensorPower{\ab}{n+k})$, for $k\geq 3$. In the scope of this paper, the best we can hope for is to obtain a formula for the Euler characteristic $\chi(\Ext^*_{\Foutpol}(\TensorPower{\ab}{n},\TensorPower{\ab}{n+k}))$, since we have no reason to believe that $\Ext^*_{\Foutpol}(\TensorPower{\ab}{n},\TensorPower{\ab}{n+k})$ has the Koszul property. Moreover, the formula we obtain below is not as clean as in the case $k=2$, as it requires some additional correction terms for small values of $n$:

\begin{rmk}
    For any $k\geq 0$ and any $n$ in the range $n+k\leq 10$, the identity \eqref{eq:formula-Ext2-Grothendieck} can be generalized to the following approximation of $\chi(\Ext^*_{\Foutpol}(\TensorPower{\ab}{n},\TensorPower{\ab}{n+k}))$:
\begin{equation}\label{eq:formula-Extk-Grothendieck}
\begin{split}
    \chi(\Ext^*_{\Foutpol}(\TensorPower{\ab}{n},\TensorPower{\ab}{n+k})) \approx &\sum_{i=0}^k{(-1)^i [\operatorname{Ind}_{\SymGroup[n]^{op}\times\SymGroup[n+k-i]}^{\SymGroup[n]^{op}\times\SymGroup[n+k]}\Ext^{k-i}_{\CatFuncGr}(\TensorPower{\ab}{n},\TensorPower{\ab}{n+k-i})]} \\
    & + (-1)^{k+1}[\Specht{(n)}\boxtimes\Specht{(n+k)}] + (-1)^{k+1}M_{n,k},
\end{split}
\end{equation}
with $M_{n,k}$ being trivial if $k$ is odd or $n < 2$, and otherwise $M_{n,k} = [\Specht{(1^n)}\boxtimes\Specht{(1^{n+k})}]$. More precisely, our computations suggest that the formula above is true when $k\leq 2$ or when $n$ is sufficiently large, while if $k\geq 3$ and $n$ is in a certain range (depending on $k$) one needs to add some sporadic correction terms to the formula\footnote{A list of these correction terms for $n+k\leq 10$ can be found in the following GitHub repository: \url{https://github.com/louishainaut/Ext-Outer-Functors}.}.
\end{rmk}

Another observation coming from Corollary \ref{cor:computing-Ext2} is the following conjectural description of the differentials of the spectral sequence considered for this paper:

\begin{rmk}
    Using Proposition \ref{prop:values-Ext0-1}, one can rewrite the short exact sequence \eqref{eq:formula-Ext2} to express $(\omega\beta\Q\SymGroup[n+2])^{[n]}$ as the following kernel:
    \begin{equation*}
         \ker\Big(\Ext^1_{\Foutpol}(\TensorPower{\ab}{n},\TensorPower{\ab}{n+1})\otimes_{\SymGroup[n+1]}\Ext^1_{\Foutpol}(\TensorPower{\ab}{n+1},\TensorPower{\ab}{n+2}) \to \Ext^2_{\Foutpol}(\TensorPower{\ab}{n},\TensorPower{\ab}{n+2})\Big).
    \end{equation*}
    The reader may have expected the $\Ext$ terms to be written in the opposite order, based on the convention that functions act on the left; however our convention that the place permutation action of the symmetric group acts on the right led us to the stated order. It is natural to wonder whether the map above is actually the Yoneda product. More generally, a mutual induction on $(\omega\beta\Q\SymGroup[n+r])^{[n]}$ and on the differentials $d_r$ suggests the following conjecture:
\end{rmk}

\begin{conj}
    The term $(\omega\beta\Q\SymGroup[n+r])^{[n]}$ is isomorphic to the subspace of 
    \begin{equation}\label{eq:tensor-product-Ext1}
        \Ext^1_{\Foutpol}(\TensorPower{\ab}{n},\TensorPower{\ab}{n+1})\otimes_{\SymGroup[n+1]}\ldots\otimes_{\SymGroup[n+r-1]}\Ext^1_{\Foutpol}(\TensorPower{\ab}{n+r-1},\TensorPower{\ab}{n+r})
    \end{equation}
    consisting of all the elements having the property that all the Yoneda products, as well as all the Massey products up to order $r$, vanish.

    Moreover, in the spectral sequence \eqref{eq:spec-seq-outer} associated with $F = \TensorPower{\ab}{n}$ and $G = \omega\beta\Q\SymGroup[m]$, all the differentials $d_r$ are induced by $(r+1)$-fold Massey products (with the convention that the $2$-fold Massey product is the Yoneda product).
\end{conj}

Writing down explicitly the spectral sequence \eqref{eq:spec-seq-outer} for $F = \TensorPower{\ab}{n}$ and $G = \omega\beta\Q\SymGroup[n+r]$, it is not too hard to prove by an induction on $r$ that $(\omega\beta\Q\SymGroup[n+r])^{[n]}$ is isomorphic to a certain subspace of \eqref{eq:tensor-product-Ext1}, using that $(\omega\beta\Q\SymGroup[n+r])^{[n]} \cong E_1^{n, -n}$ and that $d_1\colon E_1^{n, -n}\to E_1^{n+1,-n}$ is injective. The non-trivial part of this conjecture is to describe this subspace, and doing so requires more control on the differentials than we currently have.

We finish this section with another possible direction for future research. Observe that $\Ext^*_{\Foutpol}(\TensorPower{\ab}{n},\TensorPower{\ab}{m})$ has a structure of graded PROP. Explicitly, there exists a category whose objects are the natural numbers $\N$ and whose morphisms from $m$ to $n$ are $\Ext^*_{\Foutpol}(\TensorPower{\ab}{n},\TensorPower{\ab}{m})$. The vertical composition is the Yoneda product
\begin{equation*}
    \Ext^i_{\Foutpol}(\TensorPower{\ab}{n_1},\TensorPower{\ab}{n_2})\otimes_{\SymGroup[n_2]}\Ext^j_{\Foutpol}(\TensorPower{\ab}{n_2},\TensorPower{\ab}{n_3})\to \Ext^{i+j}_{\Foutpol}(\TensorPower{\ab}{n_1},\TensorPower{\ab}{n_3})
\end{equation*}
and the horizontal product is obtained from \textit{concatenation}
\begin{equation*}
    \Ext^i_{\Foutpol}(\TensorPower{\ab}{l},\TensorPower{\ab}{m})\boxtimes\Ext^j_{\Foutpol}(\TensorPower{\ab}{p},\TensorPower{\ab}{n})\to \Ext^{i+j}_{\Foutpol}(\TensorPower{\ab}{l+p},\TensorPower{\ab}{m+n}).
\end{equation*}

We can construct this concatenation morphism as follows: for any integer $r$ let $P^{\bullet}_{\TensorPower{\ab}{r}}$ be a chosen projective resolution of $\TensorPower{\ab}{r}$. (This is not exactly correct since $\Foutpol$ does not have enough projectives. See the proof of Proposition \ref{prop:description-spectral-sequence}.) Then the totalization $\operatorname{Tot}(P^{\bullet}_{\TensorPower{\ab}{l}}\otimes P^{\bullet}_{\TensorPower{\ab}{p}})$ is a projective resolution of $\TensorPower{\ab}{l+p}$, and the concatenation morphism is induced by the quotient map
\begin{equation*}
    \operatorname{Tot}(P^{\bullet}_{\TensorPower{\ab}{l}}\otimes P^{\bullet}_{\TensorPower{\ab}{p}})^{i+j}\onto P^i_{\TensorPower{\ab}{l}}\otimes P^j_{\TensorPower{\ab}{p}}.
\end{equation*}
We omit the verification that the concatenation is a well-defined operation.

\begin{rmk}
    Vespa showed \cite[Proposition 3.5]{Ves18} that the corresponding graded PROP for $\CatPolyFunc{\mathrm{pol}}$ is freely generated by the operad $\mathcal{Q}$, given by $\mathcal{Q}(n) = \Ext^*_{\CatPolyFunc{\mathrm{pol}}}(\ab,\TensorPower{\ab}{n})$.

    The analogous statement in our context is clearly false, since $\Ext^*_{\Foutpol}(\ab,\TensorPower{\ab}{n})$ is trivial unless $n=1$ (since $\ab$ is projective in $\Foutpol$). Instead, one could investigate whether $\Ext^*_{\Foutpol}(\TensorPower{\ab}{n},\TensorPower{\ab}{n+k})$ exhibit stability phenomena, ie whether, for $n$ large enough, the map
    \begin{equation*}
        \Ext^*_{\Foutpol}(\TensorPower{\ab}{n},\TensorPower{\ab}{n+k}) \to \Ext^*_{\Foutpol}(\TensorPower{\ab}{n+1},\TensorPower{\ab}{n+k+1}),
    \end{equation*}
    obtained by concatenation (say, on the left) with $\Id\in \Ext^0_{\Foutpol}(\ab,\ab)$, is injective and the $(\Q\SymGroup[n+1],\Q\SymGroup[n+k+1])$-bimodule generated by the image is the whole target. Our computations suggest that this might be the case for $\Ext^*_{\Foutpol}(\TensorPower{\ab}{n},\TensorPower{\ab}{n+2})$, with stability region $n\geq 4$.
\end{rmk}

\printbibliography

\end{document}